\newcommand{\bfD}{\boldsymbol D}
\newcommand{\uvec}{\mathbf{ u}}
\newcommand{\vvec}{\mathbf{ v}}
\newcommand{\nuk}{\nu_{k}}
\newcommand{\mcQ}{\mathcal{Q}}
\newcommand{\mcV}{\mathcal{V}}
\newcommand{\pr}{\ensuremath{\pi}} 
\newcommand{\ppow}{\ensuremath{p}} 
\DeclareMathOperator{\Tr}{Tr}
\newtheorem{mylemma}{Lemma}
\newtheorem{myprop}{Proposition}
\newtheorem{myremark}{Remark}
\newtheorem{myassumption}{Assumption}
\newcommand\subfigwidthdoublecolumn{0.49\textwidth}
\newcommand\experimentname{Arolla}
\newlist{enumthm}{enumerate}{1} 
\setlist[enumthm]{label=\upshape(\roman*),
  ref=\upshape\themythm(\roman*)}
\newlist{enumlemma}{enumerate}{1} 
\setlist[enumlemma]{label=\upshape(\roman*),
  ref=\upshape\themylemma(\roman*)}
\newlist{enumprop}{enumerate}{1} 
\setlist[enumprop]{label=\upshape(\roman*),
  ref=\upshape\themyprop(\roman*)}
\crefname{mythm}{Theorem}{Theorems}
\crefname{enumthmi}{Theorem}{Theorems}
\crefname{mylemma}{Lemma}{Lemmas}
\crefname{enumlemmai}{Lemma}{Lemmas}
\crefname{myprop}{Proposition}{Propositions}
\crefname{enumpropi}{Proposition}{Propositions}
\begin{document}

\title{Theoretical results on a block preconditioner used in ice-sheet modeling: eigenvalue bounds for singular power-law fluids}

\author[1]{Christian Helanow \thanks{Corresponding author: christian.helanow@math.su.se}}
\author[1]{Josefin Ahlkrona \thanks{ahlkrona@math.su.se}}
\affil[1]{Dept. Mathematics, Stockholm University, Stockholm, 106 91, Sweden}
\date{}

\maketitle

\begin{abstract}
The properties of a block preconditioner that has been successfully used in finite element simulations of large scale ice-sheet flow is examined. The type of preconditioner, based on approximating the Schur complement with the mass matrix scaled by the variable viscosity, is well-known in the context of Stokes flow and has previously been analyzed for other types of non-Newtonian fluids. We adapt the theory to hold for the regularized constitutive (power-law) equation for ice and derive eigenvalue bounds of the preconditioned system for both Picard and Newton linearization using \emph{inf-sup} stable finite elements. The eigenvalue bounds show that viscosity-scaled preconditioning clusters the eigenvalues well with only a weak dependence on the regularization parameter, while the eigenvalue bounds for the traditional non-viscosity-scaled mass-matrix preconditioner are very sensitive to the same regularization parameter. The results are verified numerically in two experiments using a manufactured solution with low regularity and a simulation of glacier flow. The numerical results further show that the computed eigenvalue bounds for the viscosity-scaled preconditioner are nearly independent of the regularization parameter. Experiments are performed using both Taylor-Hood and MINI elements, which are the common choices for \emph{inf-sup} stable elements in ice-sheet models. Both elements conform well to the theoretical eigenvalue bounds, with MINI elements being more sensitive to the quality of the meshes used in glacier simulations.
\end{abstract}



\section{Introduction}\label{sec:intro}
Glacial ice, as a non-Newtonian fluid, is characterized by having a variable viscosity that is dependent on the shear rate, that is the viscosity is a function of the fluid deformation. Numerical simulation of non-Newtonian fluids is challenging since the varying viscosity makes the equations governing the flow non-linear and the problem possibly ill-conditioned \cite{Leng2012}. On the discrete level the non-linearity introduced by the solution-dependent viscosity is commonly handled by linearizing the problem using the Picard or Newton method. In each Picard or Newton step a linear system has to be solved which, in the case of using iterative solvers, typically requires special preconditioning methods. 

This paper focuses on modeling the non-Newtonian flow of ice, which most commonly is considered to be a gravity-driven flow of a very viscous shear-thinning power-law fluid. We study the preconditioning of the Stokes equations, which govern the steady  creeping flow of ice, using a constitutive equation for which the viscosity, $\nu$, follows the relation 
\begin{equation}\label{eq:viscointro}
 \nu = \nu_0 (\varepsilon^2+ |\mathbf{Du}|^2)^\frac{\ppow-2}{2}.
\end{equation}
Here $\uvec$ is the velocity, $\bfD \uvec=\frac{1}{2}(\nabla \uvec + \nabla \uvec^\top)$ is the strain-rate tensor, $|\mathbf{Du}|^2 = \mathbf{Du}:\mathbf{Du} = D_{ij}D_{ij}$, $\nu_0$  and $\ppow$ are scalar material parameters, and $\varepsilon$ is a regularization parameter.

In the case of glacial ice, standard values for the material parameters are $\ppow=4/3$ and $\varepsilon = 0$ \citep[e.g.,][]{Glen1955, Cuffey2010}. For such shear-thinning flow ($\ppow<2$), we have that $\nu \rightarrow \infty$ when $|\bfD(\uvec)| \rightarrow 0$, making the Stokes equations a singular power-law system \cite{Hirnpowerlaw}.

It is important to improve the numerical methods for modeling the ice sheets on Greenland and Antarctica as such models are important tools in improving our understanding of the consequences of climate change \citep{IPCC2019,tc-14-3033-2020}. For such large-scale simulations direct solvers, which sometimes have been used for both small and large problems \cite[e.g.,][]{Ruckamp_etal2022,Seddik2012}, become memory inefficient and iterative solvers are preferable. Recently various preconditioned iterative solvers have been used in the context of ice-sheet modeling, such as domain decomposition using an overlapping additive Schwarz method, Vanka algorithm or ILU \cite{Ruckamp_etal2022,Zhang2011}. A viscosity-scaled block preconditioner has been used in \citet[e.g.,][]{Leng2012} and, in particular, in \citet{Schannwell_etal2020} where the preconditioner was successfully applied to realistic ice-sheet problems using the finite element model Elmer/Ice \citep{ElmerDescrip}.

The focus of this study is to investigate this type of block-preconditioner and to derive theoretical bounds for eigenvalues of the preconditioned algebraic system. Such block preconditioners have previously been analyzed theoretically for other types of non-Newtonian flow in \citet{GrinevichOlshanskii2009} (for Picard iterations) and \citet{He_etal2015} (for Newton iterations). In these studies the viscosity was considered to be of the form
\begin{equation}\label{eq:binghamvisco}
\nu_{min} + \nu,
\end{equation}
where $\nu_{min}>0$ is a constant scalar material parameter. For the case $\ppow=1$ this is the regularized Bingham model. Many of the estimates derived in \citep{GrinevichOlshanskii2009,He_etal2015} explicitly depend on $\nu_{min}$ and require a non-zero $\nu_{min}$ so that the viscosity is bounded from below.

We adapt the work of \citet{GrinevichOlshanskii2009} and \citet{He_etal2015} to non-Newtonian fluids with a viscosity of the form \cref{eq:viscointro}, i.e. $\nu_{min} = 0$ in \cref{eq:binghamvisco}, by in part using results from \citet{Hirnpowerlaw} specific to power-law fluids of type \cref{eq:viscointro}. Our results are validated in numerical experiments, both for a low-regularity manufactured solution and for a typical ice-modeling benchmark experiment.
We work in a finite element setting and consider both Picard and Newton linearization.

The paper is structured as follows: \Cref{sec:goveq} presents the problem formulation and the equations governing the flow of ice. \Cref{sec:previous} introduces the viscosity-scaled block preconditioner and summarizes previous related results. \Cref{sec:theoryresults} presents bounds on the condition number of the preconditioned system and compares them to a classical non-viscosity-scaled preconditioner. In \Cref{sec:experiments} the theoretical results are verified numerically with two experiments: a manufactured solution of low regularity \citep{Belenki2012} and a problem from the ice-sheet modeling benchmark test suite ISMIP-HOM \citep{Pattyn2008}. Finally, \cref{sec:summary} presents a summary with discussion and conclusions.

\section{Problem Formulation}
\label{sec:goveq} 
\subsection{Governing Equations}\label{sec:governingeq}
The $\ppow$-Stokes equations with Dirichlet boundary conditions are 
\begin{subequations}
 \begin{align}\label{eq:pstokes}
      -\nabla \pr +\nabla \cdot \mathbf{S}(\mathbf{u})  + \mathbf{f} &= \mathbf{0} \quad \text{ in } \Omega,\\
      \nabla\cdot \mathbf{u} &= 0 \quad \text{ in } \Omega,\\
      \mathbf{u} &= \mathbf{g} \quad \text{ on } \Gamma,
\end{align}
\end{subequations}
where $\mathbf{u}=(u_1,\dots, u_d)$ is the $d$-dimensional velocity field, $\pr$ is the pressure, $\mathbf{f}$ is a body force and $\mathbf{g}$ is the velocity at the boundary which must fulfill $\int_\Gamma \mathbf{g} \cdot \mathbf{n} = 0$ as to not violate the divergence constraint. The domain $\Omega \subset \mathbb{R}^d$ is open and bounded with a boundary $\Gamma$. In order to obtain a unique pressure we require that $\int_\Omega \pr \, \text{d}x = 0$. As opposed to the Stokes equations, the stress tensor $\mathbf{S(u)}$ is non-linear due to the non-linear viscosity. The regularized constitutive equation is given as:
\begin{align}
  \mathbf{S(u)} &= \nu(\mathbf{Du})\mathbf{Du},\label{eq:stress}\\
  \nu &:= \nu(\mathbf{Du}) = \nu_0 (\varepsilon^2+ |\mathbf{Du}|^2)^\frac{\ppow-2}{2}, \ \ppow\in (1, 2].\label{eq:viscosity}
\end{align}
The weak formulation of the $\ppow$-Stokes problem is to find velocity and pressure $(\uvec, \pr) \in \mcV \times \mcQ$ so that
\begin{multline}\label{eq:weakStokes}
( \mathbf{S(u)},\mathbf{Dv})_\Omega-(\nabla \cdot \vvec, \pr)_\Omega-(\nabla \cdot \uvec,q)_\Omega=(\mathbf{f}, \vvec)_\Omega \quad \forall  (\vvec,q) \in \mcV  \times \mathcal{Q},
\end{multline}
where we have used $(\cdot, \cdot)_{\Omega}$ to denote the $L^2$ scalar product, e.g., $(\uvec, \vvec)_{\Omega} = \int_{\Omega}\uvec \cdot \vvec \ dx$. Appropriate spaces for the continuous problem are the Sobolev space $\mcV:=[W^{1,\ppow}_\mathbf{g}(\Omega)]^d$ and Lebesgue space $\mcQ := L^{\ppow'}_0(\Omega):=\{L^{\ppow'}(\Omega): (\pr ,1)=0\}$, where $\ppow'=\ppow/(\ppow-1)$ \citep[see e.g.,][]{Hirnpowerlaw}. However, if an assumption is made that $\mathbf{D}\mathbf{u} \in L^\infty(\Omega)$, we can seek the solution in $\mcV:=[H^1_\mathbf{g}(\Omega)]^d$ and $\mcQ := L^2_0(\Omega)$. This amounts to for the discrete problem, resulting from linearizing \cref{eq:weakStokes}, having all iterates $\mathbf{D}\mathbf{u}^k \in L^\infty(\Omega)$ and considering each linear step as a Stokes problem with spatially variable viscosity.

\begin{myremark}
  When considering the flow of ice, the body force in \cref{eq:pstokes} is $\mathbf{f}=\rho \mathbf{\hat{g}}$, where $\rho$ is the density of ice and
  $\mathbf{\hat{g}}$ is the gravitational acceleration. The constitutive equation
for ice, called Glen's flow law \citep{Glen1955}, is typically
expressed as (cf. \cref{eq:stress}):
\[
  \mathbf{S} = 2\nu_0 D_{II}^{\frac{\ppow-2}{2}}\mathbf{Du},
\]
where $D_{II} = \frac{1}{2}\mathbf{D}_{ij}\mathbf{D}_{ij} = \frac{1}{2}\Tr(\mathbf{D^2u})$ is the second invariant of the strain-rate tensor, $\mathbf{Du}$.
Furthermore, stress-free (natural) boundary conditions and no-slip or slip conditions apply at the ice/atmosphere and ice/bedrock interface, respectively, see \cref{fig:Arolla}. These conditions are explained in more detail in  \cref{sec:experiments}. 
\end{myremark} 

\subsection{Linearization}
\label{sec:linearization}
The problem is solved iteratively by using either the Picard or Newton method to linearize \cref{eq:weakStokes}, resulting in a weak formulation where the viscosity depends on a known velocity from either a guess or a velocity solved for in the previous iteration. These methods can be seen as the linearization of the continuous problem. In particular, the non-linear form
\begin{equation}
  a(\uvec)(\vvec):= ( \mathbf{S(u)},\mathbf{D\vvec})_\Omega = \int_\Omega \nu_0 \left( \varepsilon^2 + |\bfD \uvec |^2 \right)^{\frac{\ppow-2}{2}} \bfD \uvec : \bfD \vvec \, d\mathbf{x} 
\end{equation}
which appears in \cref{eq:weakStokes} is linearized as
\begin{multline}
  \label{eq:gateaux}
  a'(\uvec^k)(\delta\uvec, \vvec) =
  \int_\Omega \nu_0 \left( \varepsilon^2 + |\bfD \uvec^k |^2 \right)^{\frac{\ppow-2}{2}} \bfD \delta\uvec : \bfD \vvec \, d\mathbf{x} \\
  + \gamma (\ppow-2)  \int_\Omega \nu_0 \left( \varepsilon^2 + |\bfD \uvec^k |^2 \right)^{\frac{\ppow-4}{2}}
  ( \bfD \uvec^k : \bfD \delta\uvec) (\bfD \uvec^k : \bfD \vvec) \, d\mathbf{x},
\end{multline}
where $\uvec^k$ is the velocity at the $k$:th non-linear iteration, known from the previous iteration or an initial guess, and $\gamma \in \{0, 1\}$. In the linearized problem the update to the velocity $\delta\uvec\in\mcV$ is solved for in each iteration, giving the next velocity iteration as $\uvec^{k+1} = \uvec^k + \delta\uvec$. For $\gamma=1$ \cref{eq:gateaux} equals the G\^{a}teaux derivative of $a(\cdot)(\cdot)$ at $\uvec^k$ in the direction of $\delta\uvec$  \citep[e.g.,][]{Hirnpowerlaw}  which gives the Newton method, while $\gamma=0$ instead results in the Picard method (fixed point iterations).

If we let $b(\uvec, \pr) = -(\nabla \cdot \uvec, \pr)_\Omega$ and $(\uvec^k, \pr^k)$ be a known velocity-pressure pair, the linearized continuous problem reads: in the $k$:th non-linear iteration find the velocity and pressure update $(\delta\uvec,\delta \pr) \in \mcV \times \mcQ$ so that
\begin{multline}
  \label{eq:linearStokes}
  a'(\uvec^k)(\delta\uvec, \vvec) + b(\vvec, \delta \pr) + b(\delta\uvec, q) = \\
    (\mathbf{f}, \vvec)_\Omega - a(\uvec^k, \vvec) - b(\vvec, \pr^k) - b(\uvec^k, q) 
    \quad \forall  (\vvec,q) \in \mcV  \times \mathcal{Q}.
\end{multline}
The updated solution to be used in the $k + 1$ iteration is then set to be \mbox{$(\uvec^{k+1}, \pr^{k+1}) = (\uvec^{k} + \delta\uvec, \pr^{k} + \delta \pr)$}.

In the above, if $\bfD\uvec^k\in L^{\infty}(\Omega)$, it is sufficient to used the spaces  $\mcV:=[H^1_\mathbf{g}(\Omega)]^d$ and $\mcQ := L^2_0(\Omega)$, instead of the spaces specified for the continuous non-linear problem in \cref{eq:weakStokes}. In the case of the discretized problem, this is valid as long as a (discrete) solution for $\uvec^k$ has been found in the previous iteration.

\subsection{Discretization}\label{eq:discretization}
\Cref{eq:linearStokes} is discretized by triangulating the domain $\Omega$ and seeking a discrete solution $(\uvec_h, \pr_h) \in V_h \times Q_h$, where $V_h$ and $Q_h$ are a pair of finite-dimensional spaces. We in this study employ conforming finite elements, i.e., $V_h \subset \mcV$ and $Q_h \subset \mcQ$, that satisfy the so-called \emph{inf-sup} or LBB (Ladyzhenskaya-Babu\v{s}ka-Brezzi) condition \citep{Babuska1973,Brezzi1974}:
\begin{equation}
  \label{eq:lbb}
  c_0 \leq \underset{p_h \in Q_h}{\text{inf}} \underset{\mathbf{u}_h \in V_h}{\text{sup}} \frac{(\nabla \cdot \mathbf{u}_h, \pr_h)}{\|\mathbf{u}_h\|_{V_h} \|\pr_h\|_{Q_h}},
\end{equation}
where $c_0$ is a positive scalar. Specifically we choose  $V_h$, $Q_h$ to correspond to Taylor-Hood $P2P1$ \citep{TaylorHood1974} or MINI elements \citep{Baiocchi1993}, as these are \emph{inf-sup} stable elements that are used in the numerical ice-sheet model Elmer/Ice \citep{Schannwell_etal2020,Gagliardini2013}.

For a unique weak solution to exist to the continuous problem \cref{eq:weakStokes}, the \emph{inf-sup} condition that needs to be satisfied involves the norms defined by the Sobolev spaces $W^{1,\ppow}$ and $L^{\ppow'}$ and monotonicity properties of the constitutive equation \cref{eq:viscosity} \citep{Hirnpowerlaw}. However, with the assumptions on the strain-rate tensor, we here follow \citep{GrinevichOlshanskii2009,He_etal2015} and use the norms induced by the inner products on the Hilbert spaces $H^1_0$ and $L^2$ with the motivation provided in \cref{sec:linearization}.

Let $\{ \varphi_i \}_{i=1}^{n}$ and $\{ \psi_i \}_{i=1}^{m}$ be the bases for $V_h$ and $Q_h$, respectively, and let $u_h = (u_1, \dots, u_{n})^\top\in\mathbb{R}^n$ and $\pr_h = (\pr_1, \dots, \pr_{m})^\top\in\mathbb{R}^m$ represent the discrete values such that $\delta \uvec_h = \sum_{i=1}^{n} u_i \varphi_i$ and $\delta \pr_h = \sum_{i=1}^{m} \pr_i \psi_i$, respectively. We can then formulate the discrete system in each iteration as
\begin{equation}\label{eq:linearsystem}
\begin{bmatrix}
A& B^\top \\
B & 0
\end{bmatrix}\begin{bmatrix}
u\\
\pr
\end{bmatrix}=\begin{bmatrix}
F\\
0
\end{bmatrix},
\end{equation}
with
\begin{align}
  \label{eq:Amatrix}
 A_{i,j}&:=a'(\uvec^k_h)(\varphi_j,\varphi_i)\\
 B_{i,j}&:= -(\nabla \cdot \varphi_i, \psi_i),
\end{align}
where $F$ is the discrete representation of the right-hand side in \cref{eq:linearStokes} and $a'$ depends on a known velocity $\uvec^k_h\in V_h$ and is understood to be defined using $\gamma = 0$ for the Picard method and $\gamma = 1$ for the Newton method, see \cref{eq:gateaux}.
%

\section{Preconditioning}
\label{sec:previous}
Several types of preconditioners for non-Newtonian, variable-viscosity Stokes flow exist in the literature with, in particular, a focus on geodynamics \citep[e.g.][]{May_etal2015,Rudi_etal2017,Fraters_etal2019,Shih_etal2021} and some directly related to ice-sheet models \citep{Isaac_etal2015}.

In this study we choose, in part motivated by the successful but heuristic application in the ice-sheet model Elmer/Ice, to precondition the linear system \cref{eq:linearsystem} with a left block preconditioner of the form
\begin{equation}\label{eq:blockpreconditioner}
P = \begin{bmatrix}
\tilde{A}& 0 \\
B & -\tilde{S}
\end{bmatrix}
\end{equation}
where $\tilde{A}$ is an approximation to $A$ and $\tilde{S}$ is an approximation to the Schur complement $S=B A^{-1} B^\top$. The approximation $\tilde{A}$ can be found with an inexact solver, while finding an appropriate $\tilde{S}$ is more intricate given that $S$ is implicitly defined by $A^{-1}$ and $B$ and cannot be expected to be a sparse matrix. A good choice for $\tilde{S}$ has the property that the eigenvalues of $\tilde{S}^{-1} S$ are well clustered. For a constant viscosity (Newtonian), linear problem, the pressure mass matrix $M$, defined as
\begin{equation}
  \label{eq:Mmatrix}
 M_{i,j} := \left( \psi_i,\psi_j \right),
\end{equation}
is a good choice for $\tilde{S}$ as it is spectrally equivalent to the Schur complement \citep[e.g.][]{Elman_etal2005}.

For variable-viscosity power-law fluids, the viscosity-scaled mass matrix $M_\nu$, defined as 
\begin{equation}
  \label{eq:Mnumatrix}
 M_{\nu,i,j} := \left( \frac{1}{\nu} \psi_i,\psi_j \right),
\end{equation}
is a more appropriate choice for $\tilde{S}$. This type of block preconditioner is used in Elmer/Ice for the shear-thinning power-law flow of ice sheets, but as part of a right preconditioner, i.e., $
\begin{bsmallmatrix*}
  \tilde{A} & B^\top \\
  0 & -M_\nu
\end{bsmallmatrix*}
$ \citep{Malinen_etal2012,manual:Elmer}.
This block preconditioner $\tilde{S}=M$ is, together with the classical choice $\tilde{S}=M$, analyzed in \citet{GrinevichOlshanskii2009} (for the Picard method) and \citet{He_etal2015} (for the Newton method) for a viscosity of the form \cref{eq:binghamvisco} assuming $\nu_{min}>0$. 

Essential to both studies to derive a lower bound for the eigenvalues of $M_{\nu}^{-1}S$ is the following auxiliary \emph{inf-sup} condition
\begin{equation}
\label{eq:nu_infsup}
c_\nu \leq \underset{\pr \in L_{\nu}}{\text{inf}} \underset{\mathbf{u} \in H^1_0}{\text{sup}} \frac{(\nabla \cdot \mathbf{u}, \pr)}{\|\nu^{1/2} \mathbf{Du}\| \|\nu^{-1/2} \pr\|},
\end{equation}
where $c_{\nu}$ is a mesh-independent constant that only weakly depends on the regular \emph{inf-sup} constant $c_0$ and $\nu$, and indirectly on $\varepsilon$ through the dependence of $\nu$. The condition is proven in \citet{GrinevichOlshanskii2009} for $\pr \in L^2_{\nu} := \{\pr \in L^2 : (\pr, \nu^{-1}) = 0\}$ with the additional condition that $(\pr, \nu^{-1/2}) = 0$. Using this \emph{inf-sup} condition it possible to improve the lower bound for the smallest eigenvalue of $M_{\nu}^{-1}S$: without \cref{eq:nu_infsup} (using the LBB condition in \cref{eq:lbb}) the lower bound depends directly on the $\varepsilon$. 

Although the conditions under which \cref{eq:nu_infsup} is valid may not be the most general, theoretical and numerical results from \citet{GrinevichOlshanskii2009} indicate that $M_{\nu}$ should have better preconditioning properties than implied by the directly $\varepsilon$-dependent lower eigenvalue bound found using \cref{eq:lbb}.

The eigenvalue bounds for $\tilde{S}^{-1} S=M^{-1} S$ depend on the material parameter $\nu_{min}$ as
\begin{equation}
\frac{c_0^2}{\nu_{max}} \leq \lambda \leq \frac{1}{\nu_{min}},
\end{equation}
for both Picard and Newton iterations, and for $\tilde{S}^{-1} S=M_{\nu}^{-1} S$ as
\begin{equation}
  c_{\nu}^2 \leq \lambda \leq d \quad\textnormal{and}\quad c_{\nu}^2 \leq \lambda \leq \frac{\nu_{max}}{\nu_{min}}
\end{equation}
for Picard and Newton iterations, respectively \citep{GrinevichOlshanskii2009,He_etal2015}.

These bounds cannot be directly be used for shear-thinning power laws with a viscosity of the form \cref{eq:viscosity} since there is explicit dependence on $\nu_{min}$. The main prospect of this paper is therefore to derive bounds for such power-law fluids. In particular, we are interested in how these bounds depend on the regularization parameter $\varepsilon$ as it gets small, as the constitutive equation conventionally used for the creep of ice amounts to $\varepsilon = 0$ in \cref{eq:viscosity} \citep{Glen1955,Duval1977,Duval1983}.

\section{Theoretical Results}
\label{sec:theoryresults}

In this section we show bounds for the eigenvalues of $\tilde{S}^{-1}S$ for both the classical choice $\tilde{S} = M$, as well as for $\tilde{S} = M_\nu$. The bounds depend on the \emph{inf-sup} constant $c_0$ (see \cref{eq:lbb}) and on the constant $c_\nu$ (see \cref{eq:nu_infsup}). These constants are determined numerically using the methods described in \cref{sec:infsupbounds}.

Since we are considering $\uvec\in H^1_0$ (pure Dirichlet boundary conditions) the Schur complement $S$ has a kernel consisting of the constant vector corresponding to the constant pressure mode, which results in a zero eigenvalue. However, since this is in practice circumvented numerically by either setting an additional constraint on the pressure (e.g., $\int_{\Omega} \pr \ dx = 0$) or by setting a null space for the linear system, we below treat $S$ as non-singular and $B$ as full rank.

To facilitate the structure of the proof of \cref{prop:mainprop}, we first list an assumption and some useful inequalities.

\begin{myassumption}
  \label{ass:strainrate}
  The following assumption is made on the deformation tensor:
  \[
    \mathbf{D}\mathbf{u}^k\in L^\infty(\Omega).
  \]
\end{myassumption}

\begin{mylemma}\label{lem:A} The following inequalities involving the strain-rate tensor hold:
  \begin{enumlemma}
  \item $\|\nabla\cdot\uvec\| \leq \|\mathbf{Du}\| \leq \|\nabla\uvec\| \quad\forall \uvec\in H^1_0(\Omega)$ \label{lem:A1}
  \item $\|\nu^{1/2}\nabla\cdot\uvec\| \leq \sqrt{d}\|\nu^{1/2}\mathbf{Du}\| \quad\forall \uvec\in H^1(\Omega)$, \\
    where $d$ is the dimension. \label{lem:A2}
  \end{enumlemma}
\begin{proof}
  (i) See e.g., \citet{GrinevichOlshanskii2009,John2016}\\
  (ii) See \citet{Kaiser2014}.\\
\end{proof}

\end{mylemma}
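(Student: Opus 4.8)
The plan is to derive both bounds from a single pointwise algebraic identity for $|\mathbf{Du}|^2$, supplemented in part~(i) by one integration by parts that exploits the homogeneous boundary condition to obtain the sharp constant~$1$.

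First I would expand $\mathbf{Du}=\tfrac12(\nabla\uvec+\nabla\uvec^\top)$ in components to obtain, pointwise a.e.\ and for every $\uvec\in H^1(\Omega)$,
\[
  |\mathbf{Du}|^2 \;=\; \tfrac12|\nabla\uvec|^2 \;+\; \tfrac12\,\partial_i u_j\,\partial_j u_i .
\]
The right-hand inequality of~(i), $\|\mathbf{Du}\|\le\|\nabla\uvec\|$, is then immediate (in fact pointwise, with no boundary condition needed): by Cauchy--Schwarz the cross term satisfies $\partial_i u_j\,\partial_j u_i \le |\nabla\uvec|^2$, so $|\mathbf{Du}|^2 \le |\nabla\uvec|^2$. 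For the left-hand inequality I would invoke $\uvec\in H^1_0(\Omega)$: integrating by parts twice and discarding the boundary terms (justified by density of $C_c^\infty(\Omega)$) gives
\[
  \int_\Omega |\nabla\cdot\uvec|^2\,dx \;=\; \int_\Omega \partial_i u_i\,\partial_j u_j\,dx \;=\; \int_\Omega \partial_i u_j\,\partial_j u_i\,dx .
\]
Integrating the pointwise identity and substituting this yields the equality $\|\mathbf{Du}\|^2 = \tfrac12\|\nabla\uvec\|^2 + \tfrac12\|\nabla\cdot\uvec\|^2$. Since the same Cauchy--Schwarz estimate integrated also gives $\|\nabla\cdot\uvec\|^2\le\|\nabla\uvec\|^2$, this equality forces $\|\mathbf{Du}\|^2 \ge \tfrac12\|\nabla\cdot\uvec\|^2+\tfrac12\|\nabla\cdot\uvec\|^2 = \|\nabla\cdot\uvec\|^2$, which is the left-hand inequality.

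For part~(ii) no boundary condition is assumed, so integration by parts is unavailable and one accepts the dimension-dependent constant. Here I would argue purely pointwise: since $\nabla\cdot\uvec=\Tr(\mathbf{Du})=\sum_{i=1}^d D_{ii}$, Cauchy--Schwarz in $\mathbb{R}^d$ gives $|\nabla\cdot\uvec|^2=\bigl(\sum_i D_{ii}\bigr)^2 \le d\sum_i D_{ii}^2 \le d\sum_{i,j}D_{ij}^2 = d\,|\mathbf{Du}|^2$. Multiplying by $\nu\ge 0$ and integrating over $\Omega$ gives $\|\nu^{1/2}\nabla\cdot\uvec\|^2 \le d\,\|\nu^{1/2}\mathbf{Du}\|^2$, and taking square roots finishes the proof.

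The only genuinely delicate point is obtaining the constant~$1$ in~(i): the pointwise argument by itself only yields $\|\nabla\cdot\uvec\| \le \sqrt d\,\|\mathbf{Du}\|$ (the same estimate as in~(ii)), and it is precisely the double integration by parts---legitimate because $\uvec$ vanishes on $\Gamma$---that converts the indefinite cross term $\partial_i u_j\,\partial_j u_i$ into the manifestly controllable $|\nabla\cdot\uvec|^2$. Everything else is elementary linear algebra together with Cauchy--Schwarz; the one routine check is that the boundary integrals of the form $\int_\Gamma u_j\,(\partial_i u_i)\,n_j\,ds$ arising in the integration by parts vanish for $H^1_0$ functions.
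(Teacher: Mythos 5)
Your proposal is correct, and it fills in exactly the standard arguments that the paper outsources to its citations (the integration-by-parts identity for $H^1_0$ fields in part (i) is the proof in \citet{John2016} and \citet{GrinevichOlshanskii2009}, and the pointwise trace/Cauchy--Schwarz bound with constant $\sqrt d$ in part (ii) is the argument in \citet{Kaiser2014}). All steps check out, including the density argument needed to justify the double integration by parts for $H^1_0$ functions.
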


\begin{mylemma}\label{lem:B} For the viscous term of the G\^{a}teaux derivative, \cref{eq:gateaux}, the following hold:
  \begin{enumlemma}
  \item $a'(\uvec^k)(\uvec, \uvec) \geq (1 + \gamma(\ppow-2))\|\nuk^{1/2}\mathbf{Du}\|^2$
    and\\
    $a'(\uvec^k)(\uvec, \uvec) \geq (1 + \gamma(\ppow-2)) \nu_0 (\varepsilon^2 + \|\mathbf{Du}^k\|^2_\infty)^{\frac{\ppow-2}{2}}\|\mathbf{Du}\|^2,$
    $\forall\ \uvec\in H^1_0(\Omega)$, where $\nuk = \nu(\mathbf{D\uvec}^k)$.\label{lem:B1}
  \item $a'(\uvec^k)(\uvec, \uvec) \leq \int_{\Omega} \nuk |\mathbf{Du}|^2\ dx = \|\nuk^{1/2}\mathbf{Du}\|^2 \leq \nu_{max} \|\mathbf{Du}\|^2$,\\
    where $\nu_{max} = \nu(\mathbf{0}) = \nu_0 \varepsilon^{\ppow-2}$.\label{lem:B2}
  \end{enumlemma}
\begin{proof} The outline of this proof can be deduced from adding intermediate steps to a proof from \citet{Hirnpowerlaw}.\\
  (i) Noting that for $\gamma\geq 0$ and $\ppow \leq 2$ the second term in \cref{eq:gateaux} is always non-positive, applying Cauchy-Schwartz inequality and using that $ |\bfD \uvec^k |^2 < \left( \varepsilon^2 + |\bfD \uvec^k |^2 \right)$, we have that
  \begin{align*}
    a'(\uvec^k)(\uvec, \uvec) &= \int_\Omega \nu_0  \left( \varepsilon^2 + |\bfD \uvec^k |^2 \right)^{\frac{\ppow-2}{2}} (\bfD \uvec : \bfD \uvec) \, d\mathbf{x} \\
     &\quad + \gamma (\ppow-2)  \int_\Omega \nu_0 \left( \varepsilon^2 + |\bfD \uvec^k |^2 \right)^{\frac{\ppow-4}{2}}|\bfD \uvec^k : \bfD \uvec|^2 \, d\mathbf{x}\\
    &\geq \int_\Omega \nu_0  \left( \varepsilon^2 + |\bfD \uvec^k |^2 \right)^{\frac{\ppow-2}{2}} |\bfD \uvec|^2 \, d\mathbf{x} \\
    &\quad + \gamma (\ppow-2)  \int_\Omega \nu_0    \left( \varepsilon^2 + |\bfD \uvec^k |^2 \right)^{\frac{\ppow-4}{2}}|\bfD \uvec^k|^2 |\bfD \uvec|^2 \, d\mathbf{x}\\
    &\geq \int_\Omega \nu_0  \left( \varepsilon^2 + |\bfD \uvec^k |^2 \right)^{\frac{\ppow-2}{2}} |\bfD \uvec|^2 \, d\mathbf{x} \\
    &\quad + \gamma (\ppow-2)  \int_\Omega \nu_0 \left( \varepsilon^2 + |\bfD \uvec^k |^2 \right)^{\frac{\ppow-2}{2}} |\bfD \uvec|^2 \, d\mathbf{x}\\
    &= (1 + \gamma(\ppow-2))\int_\Omega \nu_0  \left( \varepsilon^2 + |\bfD \uvec^k |^2 \right)^{\frac{\ppow-2}{2}} |\bfD \uvec|^2 \, d\mathbf{x}.\\
  \end{align*}
From this, the first inequality follows from using \cref{eq:viscosity} with the definition of $\nuk$ and the second inequality follows from the definition of $\|\cdot\|_{\infty}$.\\
(ii) Again, using that the second term in \cref{eq:gateaux} is non-positive, we have that
\begin{align*}
  a'(\uvec^k)(\uvec, \uvec)  &\leq \int_\Omega \nu_0 \left( \varepsilon^2 + |\bfD \uvec^k |^2 \right)^{\frac{\ppow-2}{2}} |\bfD \uvec|^2 \, d\mathbf{x} \\
                             &\leq\int_\Omega \varepsilon^{p-2} |\bfD \uvec^k |^2 \, d\mathbf{x} = \nu_{max} \|\bfD \uvec\|^2 .
\end{align*}
\end{proof}
\end{mylemma}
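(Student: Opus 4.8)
The plan is to prove the two parts of \cref{lem:B} separately, in both cases exploiting the single structural fact that the Newton correction term in \cref{eq:gateaux} — the integral carrying the scalar prefactor $\gamma(\ppow-2)$ — is non-positive, since $\gamma\ge 0$ and $\ppow\le 2$ force $\gamma(\ppow-2)\le 0$ while the integrand $\nu_0(\varepsilon^2+|\bfD\uvec^k|^2)^{(\ppow-4)/2}(\bfD\uvec^k:\bfD\uvec)^2$ is pointwise non-negative.

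\emph{Upper bound (ii).} Here I would simply discard the non-positive correction term, so that $a'(\uvec^k)(\uvec,\uvec)\le\int_\Omega \nu_0(\varepsilon^2+|\bfD\uvec^k|^2)^{(\ppow-2)/2}|\bfD\uvec|^2\,d\mathbf{x}=\|\nuk^{1/2}\bfD\uvec\|^2$ by the definition of $\nuk=\nu(\bfD\uvec^k)$ in \cref{eq:viscosity}. Then, since $\ppow-2\le 0$ and $\varepsilon^2\le\varepsilon^2+|\bfD\uvec^k(x)|^2$ pointwise, we get $(\varepsilon^2+|\bfD\uvec^k|^2)^{(\ppow-2)/2}\le(\varepsilon^2)^{(\ppow-2)/2}=\varepsilon^{\ppow-2}$, i.e. $\nuk\le\nu_0\varepsilon^{\ppow-2}=\nu(\mathbf{0})=\nu_{max}$, which yields the last inequality $\|\nuk^{1/2}\bfD\uvec\|^2\le\nu_{max}\|\bfD\uvec\|^2$.

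\emph{Lower bound (i).} Now the correction term must be bounded from below rather than dropped. First apply Cauchy--Schwarz, $(\bfD\uvec^k:\bfD\uvec)^2\le|\bfD\uvec^k|^2|\bfD\uvec|^2$; because the multiplier $\gamma(\ppow-2)$ is $\le 0$, this substitution produces a \emph{lower} bound for the correction integral, namely $\gamma(\ppow-2)\int_\Omega\nu_0(\varepsilon^2+|\bfD\uvec^k|^2)^{(\ppow-4)/2}|\bfD\uvec^k|^2|\bfD\uvec|^2\,d\mathbf{x}$. Next use $|\bfD\uvec^k|^2/(\varepsilon^2+|\bfD\uvec^k|^2)\le 1$, i.e. $(\varepsilon^2+|\bfD\uvec^k|^2)^{(\ppow-4)/2}|\bfD\uvec^k|^2\le(\varepsilon^2+|\bfD\uvec^k|^2)^{(\ppow-2)/2}$; again, multiplication by the negative scalar only decreases the value, so the correction term is $\ge\gamma(\ppow-2)\int_\Omega\nu_0(\varepsilon^2+|\bfD\uvec^k|^2)^{(\ppow-2)/2}|\bfD\uvec|^2\,d\mathbf{x}$. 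Adding back the untouched leading term gives $a'(\uvec^k)(\uvec,\uvec)\ge(1+\gamma(\ppow-2))\int_\Omega\nu_0(\varepsilon^2+|\bfD\uvec^k|^2)^{(\ppow-2)/2}|\bfD\uvec|^2\,d\mathbf{x}=(1+\gamma(\ppow-2))\|\nuk^{1/2}\bfD\uvec\|^2$, using \cref{eq:viscosity} once more. For the second estimate in (i), invoke \cref{ass:strainrate}: since $\bfD\uvec^k\in L^\infty(\Omega)$ and $\ppow-2\le 0$, pointwise $(\varepsilon^2+|\bfD\uvec^k(x)|^2)^{(\ppow-2)/2}\ge(\varepsilon^2+\|\bfD\uvec^k\|_\infty^2)^{(\ppow-2)/2}$, so the constant $\nu_0(\varepsilon^2+\|\bfD\uvec^k\|_\infty^2)^{(\ppow-2)/2}$ factors out of the integral and leaves $\|\bfD\uvec\|^2$; multiplying through by $1+\gamma(\ppow-2)$ completes it.

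The only real obstacle is bookkeeping of inequality directions around the negative coefficient $\gamma(\ppow-2)$: each of the two natural pointwise \emph{upper} bounds used inside the correction integral (Cauchy--Schwarz, then $|\bfD\uvec^k|^2\le\varepsilon^2+|\bfD\uvec^k|^2$) turns into a \emph{lower} bound after multiplication by the negative scalar, and one must verify that the resulting coefficient $1+\gamma(\ppow-2)$ is still non-negative — it equals $1$ when $\gamma=0$ and $\ppow-1\in(0,1]$ when $\gamma=1$ — so that the final rescaling by $\|\nuk^{1/2}\bfD\uvec\|^2$ and then by $\|\bfD\uvec\|^2$ preserves the sense of the inequality. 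I would also explicitly flag that \cref{ass:strainrate} is precisely what makes $\|\bfD\uvec^k\|_\infty$ finite, so that the second inequality in (i) is a genuine, non-vacuous lower bound.
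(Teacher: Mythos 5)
Your proposal is correct and follows essentially the same route as the paper's own proof: discard (for the upper bound) or carefully lower-bound (for the lower bound) the non-positive Newton correction term via Cauchy--Schwarz and the pointwise estimate $|\bfD\uvec^k|^2\le\varepsilon^2+|\bfD\uvec^k|^2$, then conclude with the definitions of $\nuk$, $\nu_{max}$ and $\|\cdot\|_\infty$. Your careful tracking of how each pointwise upper bound flips to a lower bound under the negative coefficient $\gamma(\ppow-2)$, and your explicit check that $1+\gamma(\ppow-2)>0$, make the argument slightly more complete than the paper's (whose final display in (ii) also contains a typo that your version silently corrects).
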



\begin{myprop}[Eigenvalue bounds for $\tilde{S}^{-1}S$]\label{prop:mainprop}
  Consider the Stokes problem with power-law fluid \cref{eq:viscosity} linearized using either the Picard ($\gamma=0$) or the Newton ($\gamma=1$) method as in \cref{eq:linearStokes}. Let $\lambda$ denote an eigenvalue of $\tilde{S}^{-1}S$, where $\tilde{S} = M$ or $\tilde{S} = M_\nu$. The following bounds for $\lambda$ hold:
  \begin{enumprop}
  \item \label{prop:M_bounds} For $\tilde{S} = M$:\\
    \[
      c_0^2 \varepsilon^{2-\ppow} \leq \lambda \leq \frac{(\varepsilon^2 + \|\mathbf{Du}_h^k\|^2_\infty)^{\frac{2-\ppow}{2}}}{\nu_0 (1 + \gamma(\ppow-2))}
    \]
  \item \label{prop:Mnu_bounds} For $\tilde{S} = M_\nu$:\\
    \[
      c_\nu^2 \leq \lambda \leq \frac{d}{1 + \gamma(\ppow-2)}
    \]
    where $d = 2, 3$ is the dimension of the problem.
  \end{enumprop}
\end{myprop}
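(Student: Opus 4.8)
The plan is to work with the generalized eigenvalue problem $S x = \lambda \tilde S x$ and use the Rayleigh-quotient characterization $\lambda = \frac{x^\top S x}{x^\top \tilde S x}$ for $x$ in the (appropriately restricted) pressure space. The standard identity here is that for a vector $p_h \in \mathbb{R}^m$ representing $\pr_h = \sum p_i \psi_i$,
\[
  p_h^\top S p_h = p_h^\top B A^{-1} B^\top p_h = \sup_{0\neq u_h \in \mathbb{R}^n} \frac{(p_h^\top B u_h)^2}{u_h^\top A u_h} = \sup_{0\neq \vvec_h \in V_h} \frac{b(\vvec_h,\pr_h)^2}{a'(\uvec_h^k)(\vvec_h,\vvec_h)},
\]
so that bounding $\lambda$ reduces to sandwiching this supremum between multiples of $\|\pr_h\|^2 = p_h^\top M p_h$ (for part (i)) or $\|\nu^{-1/2}\pr_h\|^2 = p_h^\top M_\nu p_h$ (for part (ii)).

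For part (i), $\tilde S = M$: the \textbf{upper bound} comes from taking any single $\vvec_h$ in the supremum, using \cref{lem:B1} (second form) to lower-bound the denominator $a'(\uvec_h^k)(\vvec_h,\vvec_h) \geq (1+\gamma(\ppow-2))\nu_0(\varepsilon^2+\|\mathbf{Du}_h^k\|_\infty^2)^{\frac{\ppow-2}{2}}\|\mathbf{Dv}_h\|^2$, then bounding the numerator $b(\vvec_h,\pr_h)^2 = (\nabla\cdot\vvec_h,\pr_h)^2 \leq \|\nabla\cdot\vvec_h\|^2\|\pr_h\|^2 \leq \|\mathbf{Dv}_h\|^2\|\pr_h\|^2$ by Cauchy--Schwarz and \cref{lem:A1}; the $\|\mathbf{Dv}_h\|^2$ factors cancel and one reads off $\lambda \leq (\varepsilon^2+\|\mathbf{Du}_h^k\|_\infty^2)^{\frac{2-\ppow}{2}}/(\nu_0(1+\gamma(\ppow-2)))$. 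The \textbf{lower bound} uses the LBB condition \cref{eq:lbb}: $\sup_{\vvec_h} \frac{(\nabla\cdot\vvec_h,\pr_h)}{\|\vvec_h\|_{V_h}\|\pr_h\|} \geq c_0$, combined with the upper bound $a'(\uvec_h^k)(\vvec_h,\vvec_h) \leq \nu_{max}\|\mathbf{Dv}_h\|^2 \leq \nu_{max}\|\vvec_h\|_{V_h}^2$ from \cref{lem:B2}, and $\nu_{max} = \nu_0\varepsilon^{\ppow-2}$; this gives $\lambda \geq c_0^2/\nu_{max} = c_0^2\varepsilon^{2-\ppow}$. One should be slightly careful about what norm $\|\cdot\|_{V_h}$ is used in the LBB statement versus the $H^1_0$-seminorm $\|\mathbf{Dv}_h\|$ (or $\|\nabla\vvec_h\|$) appearing in the bilinear form bounds; with pure Dirichlet conditions these are equivalent by Poincaré/Korn, or one simply defines $\|\vvec_h\|_{V_h} = \|\nabla\vvec_h\|$ and notes $\|\mathbf{Dv}_h\| \le \|\nabla\vvec_h\|$.

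For part (ii), $\tilde S = M_\nu$: the \textbf{upper bound} again takes a single $\vvec_h$, lower-bounds the denominator via the first form of \cref{lem:B1}, $a'(\uvec_h^k)(\vvec_h,\vvec_h) \geq (1+\gamma(\ppow-2))\|\nuk^{1/2}\mathbf{Dv}_h\|^2$, and bounds the numerator by a \emph{weighted} Cauchy--Schwarz: $(\nabla\cdot\vvec_h,\pr_h) = (\nu^{1/2}\nabla\cdot\vvec_h, \nu^{-1/2}\pr_h) \leq \|\nu^{1/2}\nabla\cdot\vvec_h\|\,\|\nu^{-1/2}\pr_h\|$, then \cref{lem:A2} gives $\|\nu^{1/2}\nabla\cdot\vvec_h\| \leq \sqrt{d}\|\nu^{1/2}\mathbf{Dv}_h\|$. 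Squaring, the $\|\nu^{1/2}\mathbf{Dv}_h\|^2$ cancels and $\lambda \leq d/(1+\gamma(\ppow-2))$, with $\|\nu^{-1/2}\pr_h\|^2 = p_h^\top M_\nu p_h$. The \textbf{lower bound} is the place where the weighted \emph{inf-sup} condition \cref{eq:nu_infsup} is essential: it directly states $\sup_{\uvec\in H^1_0}\frac{(\nabla\cdot\uvec,\pr)}{\|\nu^{1/2}\mathbf{Du}\|\|\nu^{-1/2}\pr\|} \geq c_\nu$, so $\sup_{\vvec_h}\frac{b(\vvec_h,\pr_h)^2}{\|\nuk^{1/2}\mathbf{Dv}_h\|^2} \geq c_\nu^2 \|\nu^{-1/2}\pr_h\|^2$, and combined with the denominator bound $a'(\uvec_h^k)(\vvec_h,\vvec_h) \leq \|\nuk^{1/2}\mathbf{Dv}_h\|^2$ from \cref{lem:B2} this yields $\lambda \geq c_\nu^2$.

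The main obstacle I anticipate is the lower bound in part (ii), for two reasons: first, \cref{eq:nu_infsup} as quoted in the excerpt is a \emph{continuous} statement over $H^1_0$, whereas we need it (or a discrete analogue) over the conforming subspace $V_h$ — one must either argue that conformity plus the discrete LBB constant $c_0$ transfers the weighted estimate to $V_h$ with a constant $c_\nu$ depending only on $c_0$ and the oscillation of $\nu$ (this is exactly the content of the cited Grinevich--Olshanskii argument), or restrict $\pr_h$ to the weighted-orthogonality subspace $L_\nu$ on which \cref{eq:nu_infsup} holds and check this does not lose the constant-pressure normalization already imposed. Second, one should verify that the $\sup$ over the finite-dimensional $V_h$ in the Schur-complement identity can legitimately be replaced by the $\sup$ over the continuous $H^1_0$ in the inequalities — it goes the safe direction for the upper bound (restricting the sup only decreases it, so a single test function suffices) but for the lower bound one needs the discrete inf-sup, not the continuous one, so invoking the Grinevich--Olshanskii transfer is the crux. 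The remaining steps (Cauchy--Schwarz, \cref{lem:A}, \cref{lem:B}) are routine once the eigenvalue problem is cast as a Rayleigh quotient.
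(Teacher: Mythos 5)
Your proposal is correct and follows essentially the same route as the paper's proof: the Rayleigh-quotient/Schur-complement supremum identity, \cref{lem:A,lem:B} with Cauchy--Schwarz for the upper bounds, the LBB condition \cref{eq:lbb} for the lower bound in (i), and the auxiliary weighted \emph{inf-sup} condition \cref{eq:nu_infsup} for the lower bound in (ii). Your caveat about needing a discrete analogue of \cref{eq:nu_infsup} over $V_h$ is well taken --- the paper glosses over this and handles it by assuming the condition transfers and computing $c_\nu$ numerically on the discrete spaces (\cref{sec:infsupbounds}) --- so you are, if anything, more careful than the published argument.
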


\begin{proof}
  Denoting by $\langle v, w \rangle = w^\top v$ the Euclidean inner product, we have that
  \begin{align*}
    a'(\uvec^k_h)(\vvec_h, \vvec_h) &= \langle{A v, v}\rangle,\\
    \|q_h\|^2 &= \langle{M q, q}\rangle,\\
    \|\nuk^{-1/2} q_h\|^2 &= \langle{M_\nu q, q}\rangle.
  \end{align*}
  Furthermore, due to the definitions of $A$ (symmetric positive definite) and $B$, we have that
  \begin{align}
    \nonumber\label{eq:schur_limit}
    \langle{Sq, q}\rangle
    &= \langle{BA^{-1}B^\top q, q}\rangle = \langle{A^{-1}B^\top q, B^\top q}\rangle = \\
    &= \underset{v \in \mathbb{R}^n}{\text{sup}}{\frac{\langle{v, B^\top q}\rangle^2}{\langle{Av, v}\rangle}} = \underset{\vvec_h \in V_h}{\text{sup}}{\frac{(\nabla\cdot \vvec_h, q_h)^2}{a'(\uvec^k)(\vvec_h, \vvec_h)}}.
  \end{align}
  Proof of (i): From \cref{lem:B1} followed by \cref{lem:A1} and Cauchy-Schwartz inequality we have that:
  \begin{align*}
    \underset{\vvec_h \in V_h}{\text{sup}} {\frac{(\nabla\cdot \vvec_h, q_h)^2}{a'(\uvec^k_h)(\vvec_h, \vvec_h)}} & \leq \underset{\vvec_h \in V_h}{\text{sup}}{\frac{(\varepsilon^2 + \|\mathbf{Du}^k_h\|^2_\infty)^{\frac{2-\ppow}{2}}(\nabla\cdot \vvec_h, q_h)^2}{\nu_0 (1 + \gamma(\ppow-2)) \|\mathbf{Dv}_h\|^2}}\\
                                                                                                                  &\leq \underset{\vvec_h \in V_h}{\text{sup}}{\frac{(\varepsilon^2 + \|\mathbf{Du}^k_h\|^2_\infty)^{\frac{2-\ppow}{2}}(\nabla\cdot \vvec_h, q_h)^2}{\nu_0 (1 + \gamma(\ppow-2)) \|\nabla\cdot\vvec_h\|^2}} \\
                                                                                                                  &\leq \underset{\vvec_h \in V_h}{\text{sup}}{\frac{(\varepsilon^2 + \|\mathbf{Du}^k_h\|^2_\infty)^{\frac{2-\ppow}{2}}\|\nabla\cdot\vvec_h\|^2 \|q_h\|^2}{\nu_0(1 + \gamma(\ppow-2)) \|\nabla\cdot\vvec_h\|^2}}\\
                                                                                                                  &\leq \frac{(\varepsilon^2 + \|\mathbf{Du}^k_h\|^2_\infty)^{\frac{2-\ppow}{2}}}{\nu_0(1 + \gamma(\ppow-2))} \langle Mq_h, q_h \rangle,
  \end{align*}
  which together with \cref{eq:schur_limit} gives the upper bound of (i).
  Similarly the lower bound can be derived by using \cref{lem:B2}, followed by \cref{lem:A1,eq:lbb}:
  \begin{equation*}
    \underset{\vvec_h \in V_h}{\text{sup}}{\frac{(\nabla\cdot \vvec_h, q_h)^2}{a'(\uvec^k_h)(\vvec_h, \vvec_h)}} \geq
    \underset{\vvec_h \in V_h}{\text{sup}}{\frac{(\nabla\cdot \vvec_h, q_h)^2}{\nu_{max} \|\mathbf{Dv}_h\|^2}}\geq\underset{\vvec_h \in V_h}{\text{sup}}{\frac{(\nabla\cdot \vvec_h, q_h)^2}{\nu_{max} \|\nabla\vvec_h\|^2}} \geq \frac{c_0^2}{\varepsilon^{\ppow-2}} \langle Mq, q\rangle.
  \end{equation*}
  Proof of (ii): From \cref{lem:B1}, followed by the Cauchy-Schwartz inequality and \cref{lem:A2} we have that
  \begin{align*}
    \underset{\vvec_h \in V_h}{\text{sup}} {\frac{(\nabla\cdot \vvec_h, q_h)^2}{a'(\uvec^k_h)(\vvec_h, \vvec_h)}} & \leq \underset{\vvec_h \in V_h}{\text{sup}}{\frac{(\nuk^{1/2} \nabla\cdot \vvec_h, \nuk^{-1/2} q_h)^2}{(1 + \gamma(\ppow-2))\|\nuk^{1/2}\mathbf{Dv}_h\|^2}}\\
    &\leq \underset{\vvec_h \in V_h}{\text{sup}}{\frac{\|\nuk^{1/2} \nabla\cdot \vvec_h\|^2  \|\nuk^{-1/2} q_h\|^2}{(1 + \gamma(\ppow-2))\|\nuk^{1/2}\mathbf{Dv}_h\|^2}}\\
                                                                                                                          &\leq \underset{\vvec_h \in V_h}{\text{sup}}{\frac{d \|\nuk^{1/2}\mathbf{Dv}_h\|^2  \|\nuk^{-1/2} q_h\|^2}{(1 + \gamma(\ppow-2))\|\nuk^{1/2}\mathbf{Dv}_h\|^2}}\\
                                                                                                                          & \leq \frac{d}{(1 + \gamma(\ppow-2))} \langle M_{\nu} q, q \rangle,
  \end{align*}
  which together with \cref{eq:schur_limit} gives the upper bound of \emph{(ii)}. 
  The lower bound can be derived using \cref{lem:B2} and the auxiliary \emph{inf-sup} condition in \cref{eq:nu_infsup}
  \begin{equation*}
    \underset{\vvec_h \in V_h}{\text{sup}}{\frac{(\nabla\cdot \vvec_h, q_h)^2}{a'(\uvec^k_h)(\vvec_h, \vvec_h)}} \geq
    \underset{\vvec_h \in V_h}{\text{sup}}{\frac{(\nabla\cdot \vvec_h, q_h)^2}{\|\nuk^{1/2} \mathbf{Dv}_h\|^2}}
    \geq c_{\nu}^2 \langle M_\nu q, q \rangle,
  \end{equation*}
  which, together with \cref{eq:schur_limit}, gives the lower bound of (ii).
\end{proof}

\begin{myremark}
The constants $c_0$ and $c_\nu$ will be determined numerically, see \cref{sec:infsupbounds}. The constant $c_0$ is independent of $\varepsilon$, while theoretical results from \citet{GrinevichOlshanskii2009} imply that $c_\nu$ will only weakly depend on $\varepsilon$.
\end{myremark}

\begin{myremark}
  For the classical preconditioner $\tilde{S}=M$ the lower bound is proportional to power of $\varepsilon$ (given that $\ppow < 2$) and the upper bound depends on $\varepsilon$ and the maximum value of the strain rate in such a way that the bound for the ratio $\lambda_{max}/\lambda_{min}$ increases for as $\varepsilon$ becomes smaller. This is, in general, bad for the performance of linear solvers. The choice  $\tilde{S}=M_\nu$ is clearly better as it has a constant upper bound and the lower bound has, as shown in \citet{GrinevichOlshanskii2009}, a much weaker dependence on $\varepsilon$. The Picard method ($\gamma=0$) results in a slightly lower bound for $\lambda_{max}$, however, the better non-linear convergence properties of the Newton method will for most applications likely make the Newton method the overall faster option.
\end{myremark}

\subsection{Determining the \emph{inf-sup} constants $c_0$ and $c_\nu$}
\label{sec:infsupbounds}
To investigate how the eigenvalue bounds from \cref{sec:theoryresults} relate to numerically computed eigenvalues (\cref{sec:experiments}), we compute the two \emph{inf-sup} constants $c_0$ and $c_\nu$ numerically.

We follow the method presented in \citet{Qin1994,ArnoldRognes2009} and make use of and modify the software ASCoT \citep{Rognes2009} which is Python module built on top of the FEniCS framework \citep{Alnaes2015} that automates the testing of stability conditions like those in \cref{eq:lbb,eq:nu_infsup}. The \emph{inf-sup} is the square root of the minimum eigenvalue for the following generalized eigenvalue problem: find $0 \neq (\uvec_h, \pr_h)\in V_h\times Q_h$ so that 
\begin{equation*}
  \langle \uvec_h , \vvec_h \rangle_V + b( \vvec_h, \pr_h ) + b( \uvec_h , q ) = -\lambda \langle \pr_h , q \rangle_Q,\quad \forall (\vvec_h, q) \in V_h \times Q_h
\end{equation*}
where $\langle \cdot, \cdot \rangle_{V,Q}$ represents the inner product on the velocity and pressure space, respectively, that induces the norms used in the two separate \emph{inf-sup} conditions \cref{eq:lbb,eq:nu_infsup}. For the numerical computation of $c_\nu$, we make the assumption that the above variational formulation, proven in \citet[e.g.,][]{Qin1994} for $c_0$, also holds for $c_\nu$.
The discrete eigenvalue problem is solved using SLEPc
\citep{Hernandez_etal2005,slepc-users-manual} with appropriate
restrictions, such as basis for the nullspace consisting of the
constant functions in the case of pure Dirichlet conditions.

\section{Numerical Experiments}\label{sec:experiments}
In this section we numerically confirm that the bounds of \cref{prop:mainprop} hold. Since the bounds depend on $\varepsilon$, a parameter that should be as small as possible in ice-sheet models, we study how the eigenvalues vary with $\epsilon$. We run experiments using Picard and Newton iterations for both $P2P1$ elements and MINI-elements. All simulations are performed using an in-house Python module built on top of the FEniCS framework \citep{Alnaes2015,fenics:book} compiled with SLEPc \citep{Hernandez_etal2005,slepc-users-manual} as a backend.

To compute the eigenvalues of $\tilde{S}^{-1}S$ we assemble the matrices $\tilde{S}, B, B^\top$ and $A$ and explicitly compute the inverses $A^{-1}$ to assemble $S=BA^{-1}B^\top$. The generalized eigenvalue problem
\begin{equation}
  \label{eq:schur_eigen}
  Sx = \lambda\tilde{S}x
\end{equation}
is then solved using SLEPc. Given the expensive operation of directly computing $A^{-1}$, we only assemble the matrices and solve for the eigenvalues in the last Picard or Newton iteration, i.e., in the iteration for which the method has converged to a given tolerance. The method is considered to have convergence when a tolerance of relative tolerance $r_{tol} = 10^{-6}$ or absolute tolerance $a_{tol} = 10^{-10}$ has been reached. At the time of convergence, the solution $(\uvec, \pr)$ together the in the non-linear iteration computed represent the final state of the system, so we deem this approach to be sufficient for investigating the theoretical bounds. We, however, in a few cases computed the eigenvalues for \cref{eq:schur_eigen} for every iteration in a simulation run: the outcomes of these did not change any of the results compared to the last iteration.

The system in each non-linear iteration is solved with PETSc \cite{BalayEtAl1997,webpage:petsc} using GMRES \citep{SaadSchultz1986} preconditioned by $P$ with either $\tilde{S}=M$ or $\tilde{S}=M_\nu$ (\cref{eq:blockpreconditioner}), with the action of $P^{-1}$ approximated using AMG \citep{HensonYang2002}.

\subsection{Model problems}\label{sec:modelproblems}

\subsubsection{A manufactured solution for power-law fluids}
We use the manufactured solution (MS) presented in \citet{Belenki2012} to solve the Dirichlet problem \cref{eq:pstokes} in two dimensions on the domain $(x,y)\in [-1, 1]\times [-1, 1]$. The right-hand side and Dirichlet boundary conditions are given by inserting the specified solution of the problem,
\begin{equation}\label{eq:belenkisolution}
\uvec=|(x,y)|^{a-1}(x,-y)^\top, \quad \pr=|(x,y)|^b,
\end{equation}
into \cref{eq:pstokes}. For the solution \cref{eq:belenkisolution} to be regular enough it is required that $a > 1$ and $b > -1 + \frac{2}{\ppow}$. The manufactured solution allows us to make sure that 1) our solution is correct, and 2) control the regularity of the problem. Since we want to ensure that our results hold for challenging problems we follow \citep{Belenki2012} and set $a = 1 + \delta$ and $b = -1 + \frac{2}{\ppow} + \delta$ with $\delta = 0.01$, specifying the solution to be of very low regularity. The material parameters are set to $\nu_0 = 1$ and $\ppow = 4/3$.

The square domain is discretized using a structured mesh which subdivides each side in $nx$ sections, where the resulting rectangles are diagonally cut to form $2 (nx)^2 $ triangle elements. The typical mesh in this experiment used $nx = 32$, resulting in 2048 elements. The initial guess starting the non-linear iteration is zero when using the Picard method, while the Newton method uses an initial guess solved for using the Picard method with a low relative tolerance ($r_{tol} = 10^{-2}$) for 5 iterations.

\subsubsection{A glacial-ice benchmark experiment}
To investigate how well the theory applies to practical ice-modeling examples we run the classical benchmark experiment ISMIP-HOM E, which consists of a stationary simulation of Haut Glacier d'Arolla situated in the Swiss alps \citep{Pattyn2008}, see \cref{fig:Arolla}.  
\begin{figure*}[h!]
\centering
      \includegraphics[width=0.7\textwidth]{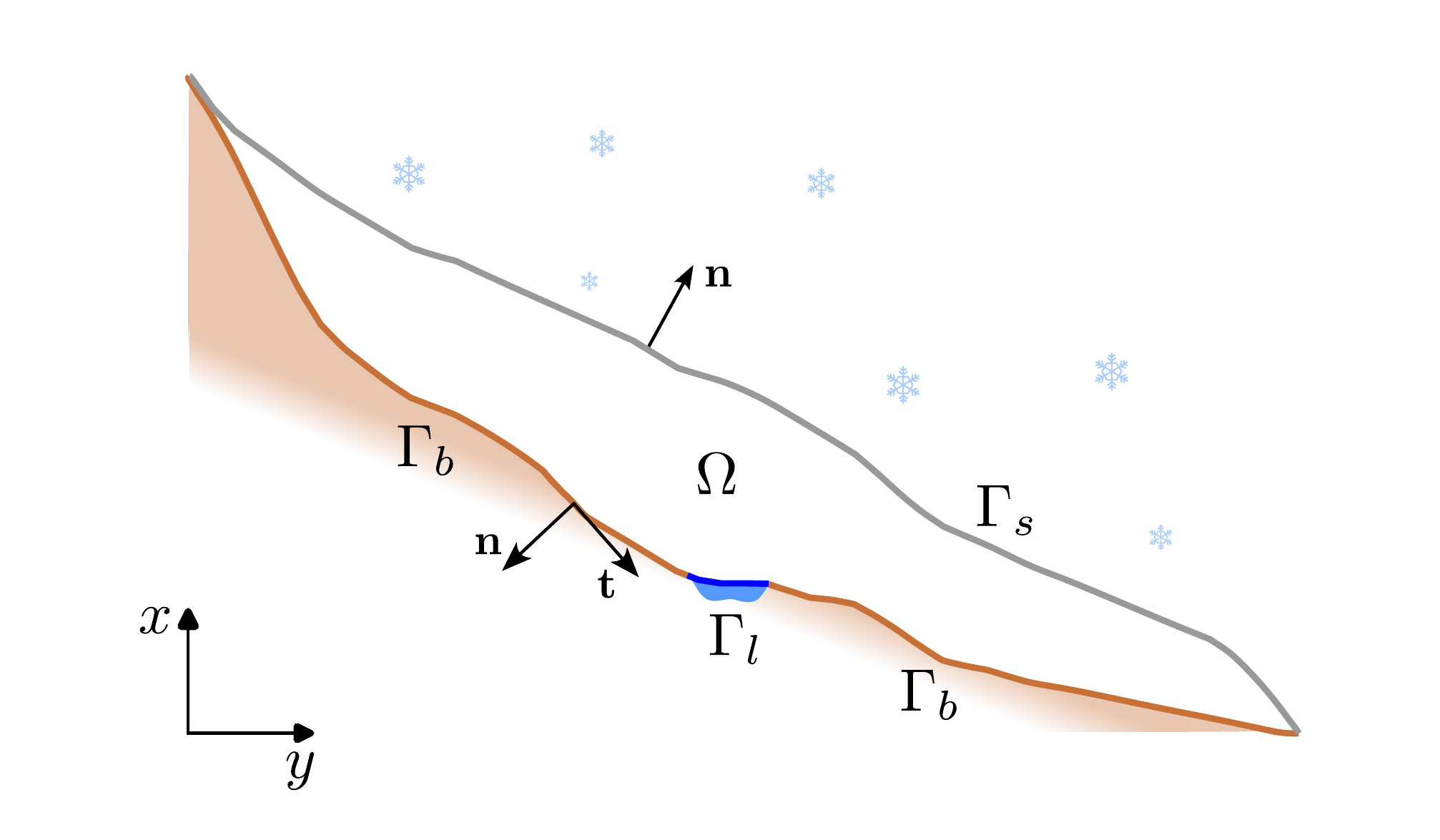}
      \caption{A conceptual cross section of Haut Glacier d'Arolla, the domain used for experiment \experimentname{}. The boundary of the domain $\Gamma$ is subdivided into the glacier surface $\Gamma_s$ (gray line) and the base of the ice which rests partly on bedrock ($\Gamma_b$, brown line) and partly on a subglacial lake ($\Gamma_l$, blue line).} \label{fig:Arolla}
\end{figure*}
The glacier boundary $\Gamma$ consists of the ice/atmosphere interface $\Gamma_s$ and ice/bedrock interface $\Gamma_b$. The ice velocity and pressure is given as the solution to \cref{eq:pstokes} with $\ppow = 4/3$, $\nu_0$ set to a typical value used for simulations of isothermal ice, $\mathbf{f}=\rho \mathbf{\hat{g}}$ and with appropriately modified boundary conditions. Over the subglacial lake, $\Gamma_l$, a Navier-slip condition applies with an impenetrability condition in the normal direction and free slip in the tangential direction, while a no-slip condition applies to the rest of the bed $\Gamma_b$. At the ice surface, $\Gamma_s$, a stress-free condition applies. If we by $\mathbf{n}$ and $\mathbf{t}$ denote the to the boundary outward-pointing unit normal and tangential vectors, respectively, the boundary conditions can be summarized as:

\begin{subequations}
\begin{align}
  \uvec\cdot\mathbf{n} &= 0 \text{ on } \Gamma_l,\label{eq:bc_imp}\\
  \mathbf{t} \cdot \mathbf{S} \cdot \mathbf{n} &= 0 \text{ on } \Gamma_l,\label{eq:bc_freeslip}\\
  (\mathbf{S}-\pr\mathbf{I})\cdot \mathbf{n} &= \mathbf{0} \text{ on } 
                                             \Gamma_s,\label{eq:bc_stressfree}\\
  \uvec &= \mathbf{0} \text{ on } \Gamma_b,\label{eq:bc_noslip}.
\end{align}
\end{subequations}
The impenetrability condition \cref{eq:bc_imp} is here implemented strongly through a local rotation of the coordinate system in the direction of a discrete normal average of neighboring cell facets at the degree of freedom \citep{John2002,manual:Elmer}. The boundary conditions \cref{eq:bc_freeslip,eq:bc_stressfree} are both natural boundary conditions that are weakly implemented through the variational formulation, while \cref{eq:bc_noslip} is a Dirichlet condition on the velocity.

The unstructured mesh used to discretize the Haut Glacier d'Arolla
experiment is the result of a Delauney triangulation of the domain
using \texttt{Gmsh} \citep{Geuzaine2009}, with the characteristic mesh
size, $lc$, being a measure of the typical cell size (longest edge of
a triangle) of the mesh. The typical mesh used in this experiment
consists of 1620 triangles with $lc=32$. The initial guess starting
the non-linear iteration is zero using both the Picard and Newton
method.

\begin{figure}[h!]
  \centering
  \begin{subfigure}[b]{\subfigwidthdoublecolumn}
    \centering
    \includegraphics[width=\textwidth]{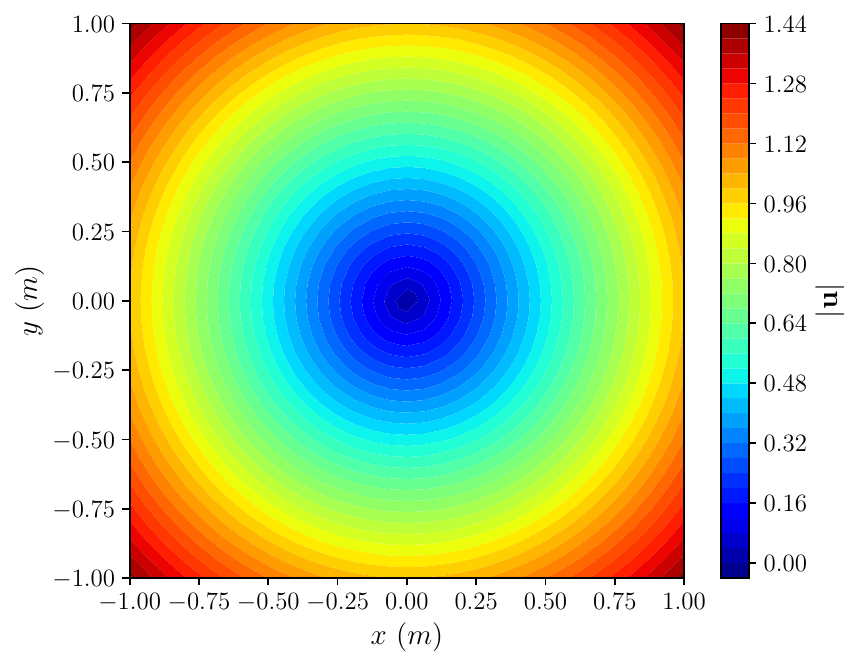}
    \caption{Simulated speed, $|\uvec|$, for the MS with low regularity experiment using $P2P1$}
    \label{fig:simulation_belenki}
  \end{subfigure}
  \hfill
  \begin{subfigure}[b]{\subfigwidthdoublecolumn}
    \centering
    \includegraphics[width=\textwidth]{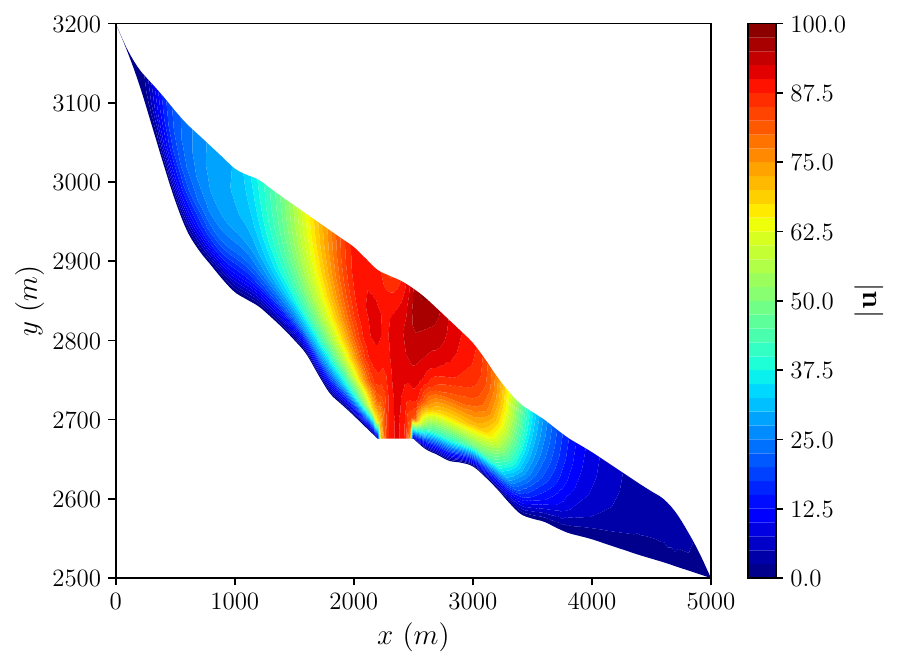}
    \caption{Simulated speed, $|\uvec|$, for the \experimentname{} experiment using $P2P1$}
    \label{fig:simulation_arolla}
  \end{subfigure}
  \caption{Simulation results from the two experiments: manufactured
    solution (MS) with low regularity and Haut Glacier d'Arolla
    (\experimentname{}). The colors shows the magnitude of the velocity,
    $|\uvec|$.}
  \label{fig:simulations}
\end{figure}

\subsection{Results for $\tilde{S}=M$}
\label{sec:results_M}
The theoretical upper and lower eigenvalue bounds, using the numerically computed $c_0$, for $M^{-1}S$ in the final Newton iteration are shown in red dashed and green dotted lines, respectively, in \cref{fig:Mgraphs}, and computed largest and smallest eigenvalues are shown as up and down triangle markers, respectively ($\lambda_{max}$ in red and $\lambda_{min}$ in green). Theory predicts that the lowest eigenvalue $\lambda_{min}$ decreases with $\varepsilon$ while the largest eigenvalue should be bounded from below. This leads to a large ratio $\lambda_{max}/\lambda_{min}$ ($10^4$-$10^6$ for the glacier) which in general leads to bad performance of linear solvers. For the manufactured problem \cref{eq:belenkisolution} theory and experiments align very well for $\lambda_{max}$, while the experimentally computed $\lambda_{min}$ are larger than the theoretical bound predicts for the manufactured problem. We believe this is due to that there is an unresolved very localized peak in the viscosity for the manufactured problem. On Haut Glacier d'Arolla the viscosity is high in a more distributed area around the ice/atmosphere interface, and so theory and experiments align very well for $P2P1$ elements. The MINI elements result in a significantly lower value for $c_0$ for the \experimentname{} domain and resulting lower theoretical bounds for $\lambda_{min}$. The small value for $c_0$ is related to the locally poor quality of elements close in parts of the domain at this mesh resolution (see \cref{sec:mesh_quality}) and most likely leads to small enough eigenvalues that numerical errors of the eigenvalue computations become significant (\cref{fig:Marolla_mini}). 

In addition to the \experimentname{} experiment, we performed simulations with only no-slip (Dirichlet) boundary conditions at the bed, i.e., we set $\Gamma_l = \emptyset$: we found no significant change comparing the results of these simulations to \experimentname{} indicating that the boundary conditions, at least in this case, do not have a significant impact on the character of the preconditioned system, but that the results are rather affected by the value of $c_0$ and the strong dependence on $\varepsilon$. 

\begin{figure*}[h!]
  \centering
  \begin{subfigure}[b]{\subfigwidthdoublecolumn}
    \centering
    \includegraphics[width=\textwidth]{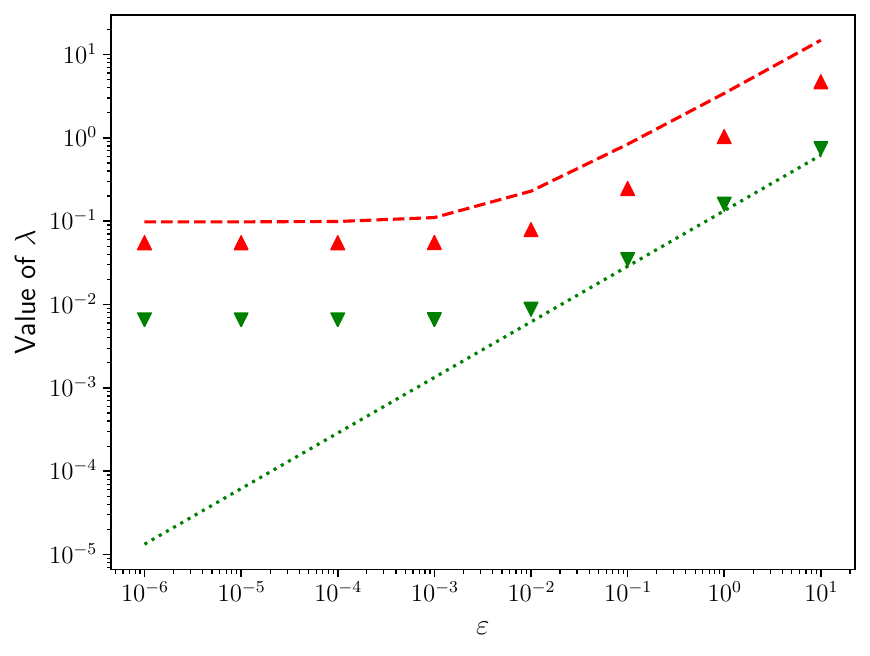}
    \caption{MS with low regularity - $P2P1$}
    \label{fig:Mbelenki}
  \end{subfigure}
  \hfill
  \begin{subfigure}[b]{\subfigwidthdoublecolumn}
    \centering
    \includegraphics[width=\textwidth]{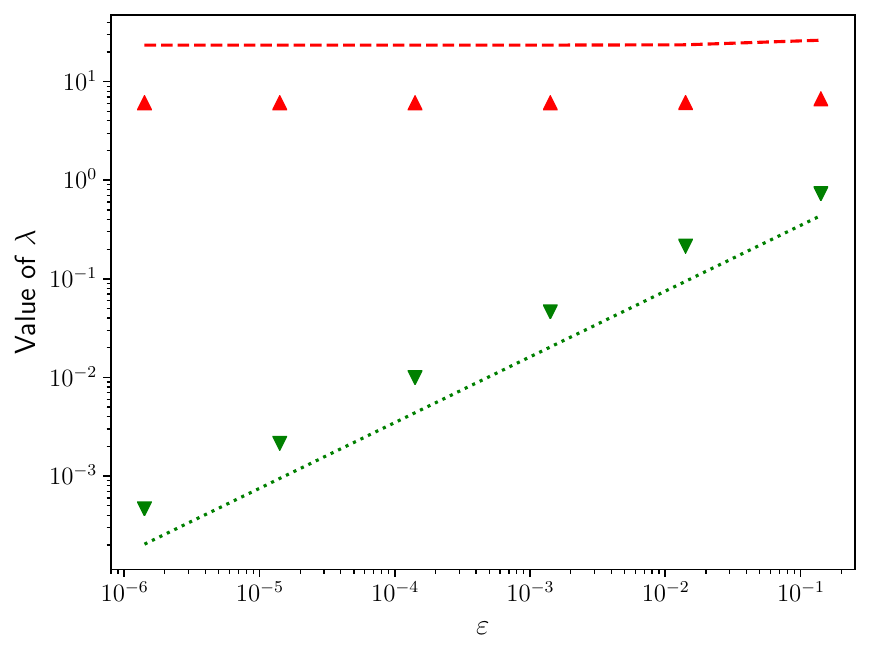}
    \caption{\experimentname{} - $P2P1$}
    \label{fig:Marolla}
  \end{subfigure}
  \begin{subfigure}[b]{\subfigwidthdoublecolumn}
    \centering
    \includegraphics[width=\textwidth]{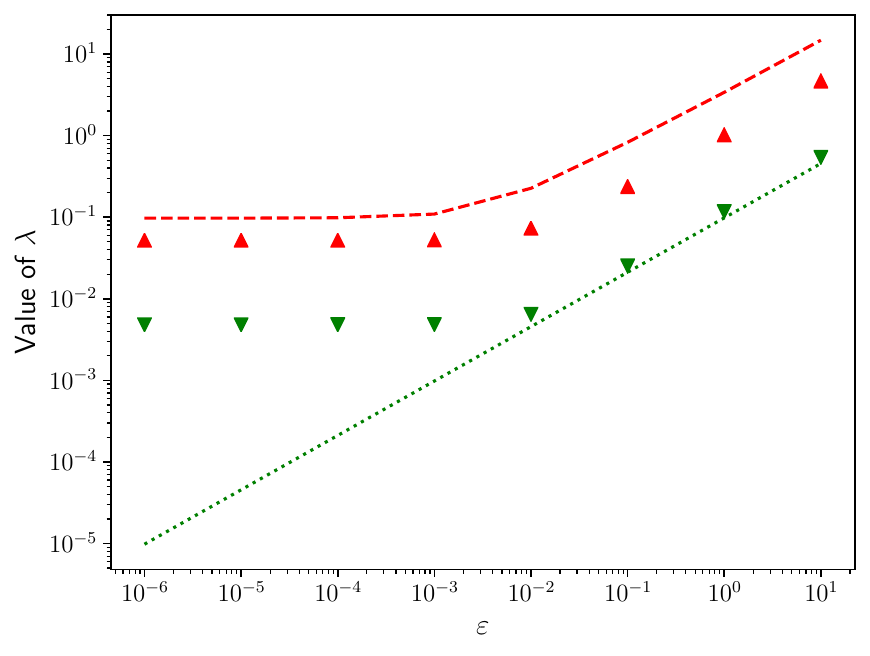}
    \caption{MS with low regularity - MINI}
    \label{fig:Mbelenki_mini}
  \end{subfigure}
  \hfill
  \begin{subfigure}[b]{\subfigwidthdoublecolumn}
    \centering
    \includegraphics[width=\textwidth]{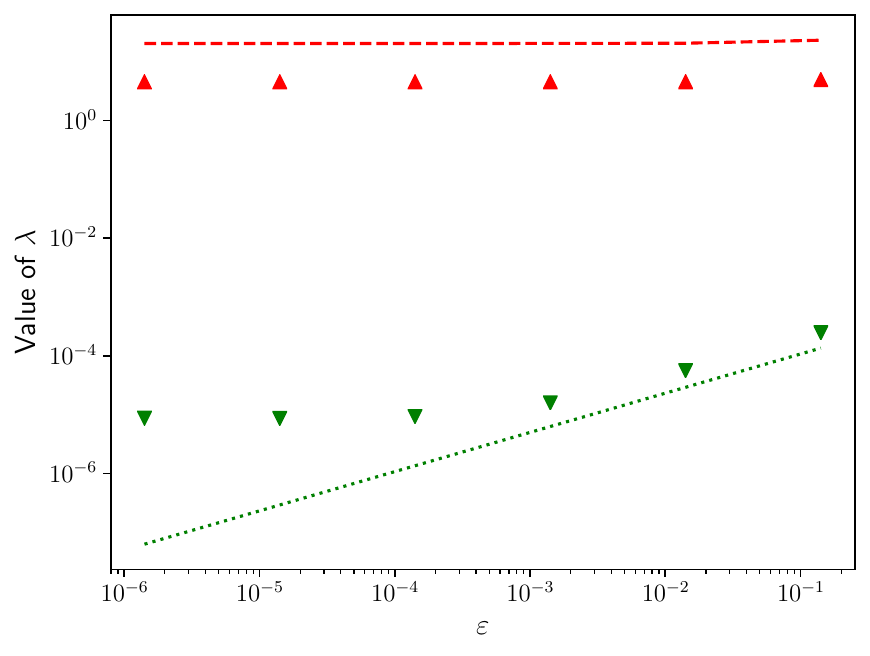}
    \caption{\experimentname{} - MINI}
    \label{fig:Marolla_mini}
  \end{subfigure}
  \caption{Eigenvalues of $M^{-1}S$ and their dependency on the
    regularization parameter $\varepsilon$ for the experiments using
    the manufactured solution (MS) with low regularity and Haut
    Glacier d'Arolla (\experimentname{}). The simulations are
    performed using the Newton method ($\gamma = 1$) and computations
    are made at the final non-linear iteration. The theoretical lower
    and upper eigenvalue bounds,
    \mbox{$c_0^2 \varepsilon^{2-\ppow} \leq \lambda \leq \frac{3}{\nu_0}
    (\varepsilon^2 +
    \|\mathbf{Du}_h^k\|^2_\infty)^{\frac{\ppow-2}{2}}$}, are
    shown as dotted green and dashed red lines, respectively
    (\cref{prop:M_bounds} with $\gamma = 1$ and $\ppow = 4/3$).
    Computed smallest non-zero, $\lambda_{min}$, and largest,
    $\lambda_{max}$, eigenvalues are shown as green down and red up triangles,
    respectively.}
  \label{fig:Mgraphs}
\end{figure*}

\subsection{Results for $\tilde{S}=M_\nu$}
\label{sec:results_Mnu}
As expected from theory, the eigenvalues are independent of $\varepsilon$ for the choice $\tilde{S}=M_\nu$, see \cref{fig:Mnugraphs}. Theory and experiments agree very well. Using $P2P1$ elements the ratio $\lambda_{max}/\lambda_{min}$  is smaller than $10^1$ for both the manufactured problem and Haut Glacier d'Arolla, which is beneficial for linear solvers. Using MINI elements for the glacier simulation does however result in a fairly large ratio $\lambda_{max}/\lambda_{min}$. This has to do with the value of $c_0$, which is affected by e.g., mesh quality, and will be explored more in the next section.

\begin{figure*}[h!]
     \centering
     \begin{subfigure}[b]{\subfigwidthdoublecolumn}
       \centering
\includegraphics[width=\textwidth]{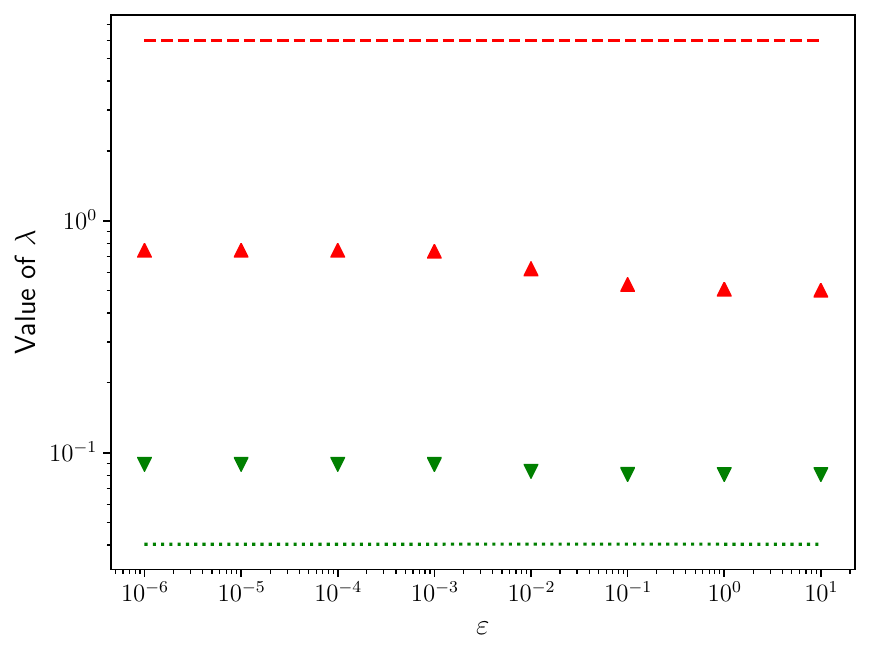}
       \caption{MS with low regularity - $P2P1$}
       \label{fig:Mnubelenki}
     \end{subfigure}
     \hfill
     \begin{subfigure}[b]{\subfigwidthdoublecolumn}
       \centering
       \includegraphics[width=\textwidth]{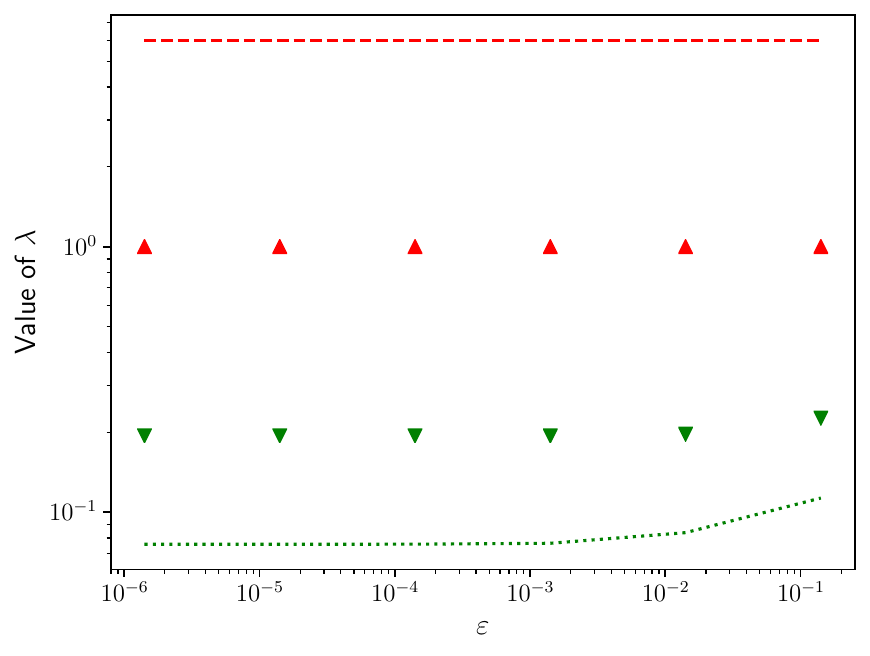}
       \caption{\experimentname{} - $P2P1$}
       \label{fig:Mnuarolla}
     \end{subfigure}
     \begin{subfigure}[b]{\subfigwidthdoublecolumn}
       \centering
       \includegraphics[width=\textwidth]{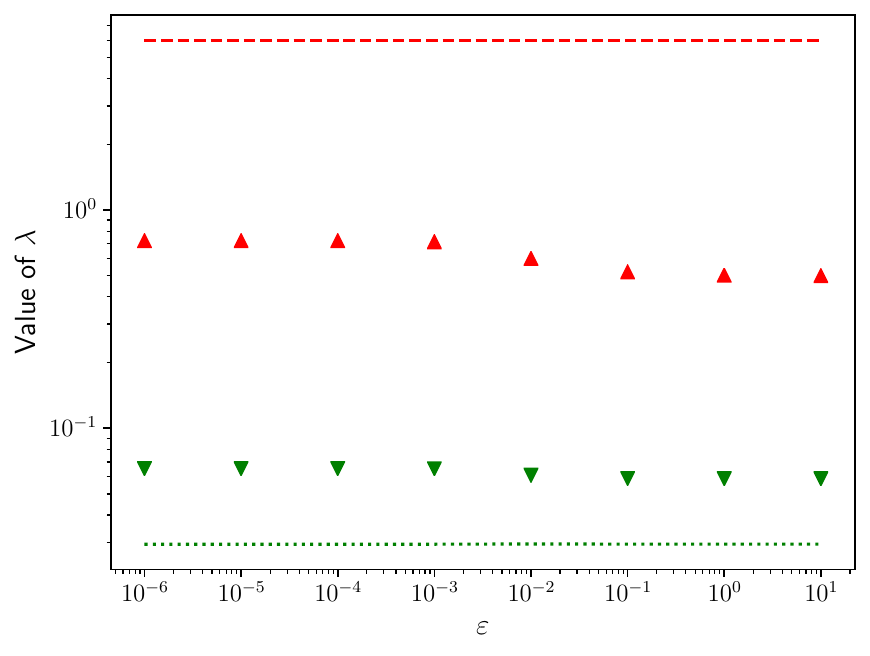}
       \caption{MS with low regularity - MINI}
       \label{fig:Mnubelenki_mini}
     \end{subfigure}
     \hfill
     \begin{subfigure}[b]{\subfigwidthdoublecolumn}
       \centering
\includegraphics[width=\textwidth]{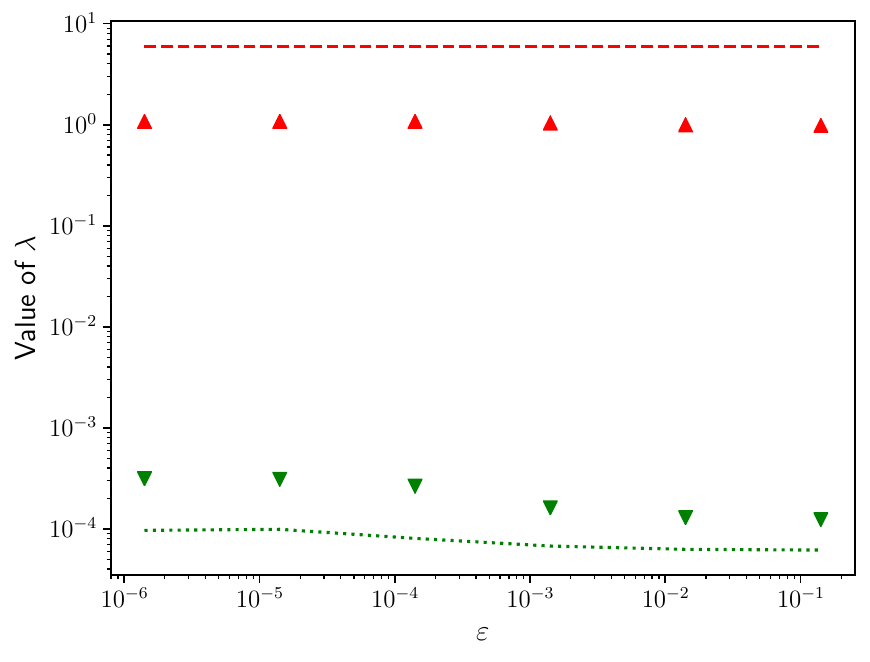}
       \caption{\experimentname{} - MINI}
       \label{fig:Mnuarolla_mini}
     \end{subfigure}
     \caption{Eigenvalues of $M_{\nu}^{-1}S$ and their dependency on
       the regularization parameter $\varepsilon$ for the experiments
       using the manufactured solution (MS) with low regularity and
       Haut Glacier d'Arolla (\experimentname{}). Colors and markers
       as in \cref{fig:Mgraphs}, except for the theoretical eigenvalue
       bounds which for $M_{\nu}^{-1}S$ are
       \mbox{$c_\nu^2 \leq \lambda \leq 6$} (\cref{prop:Mnu_bounds} with
       $\gamma = 1$ and $\ppow = 4/3$). }
     \label{fig:Mnugraphs}
\end{figure*}

\subsection{Mesh quality and MINI elements}
\label{sec:mesh_quality}
To further investigate how the smallest eigenvalues using MINI elements are affected by mesh quality, we below present results that show how the \emph{inf-sup}
constant $c_{\nu}$ is affected by the mesh quality.

\cref{fig:Mnu_mesh} shows the eigenvalue bounds for both experiments using the $P2P1$ element indicating that these are independent of the mesh sizes $nx$ and $lc$. For the MS experiment using MINI elements give very similar results. However, for the unstructured mesh used in \experimentname{}, the minimum eigenvalue $\lambda_{min}$ can be seen to decrease with a finer mesh sizes, see \cref{fig:Mnuarolla_mesh_mini}.

A reasonable explanation for this behavior is the locally reduced mesh quality that occurs when using an finer unstructured triangulation around the cusp-like geometry at the head of glacier (see \cref{fig:Arolla}). Compared to $P2P1$, using MINI with low-quality elements has a more significant impact on the \emph{inf-sup} constant $c_0$, which auxiliary \emph{inf-sup} constant $c_{\nu}$ depends on. The degradation of $\lambda_{min}$ shown in \cref{fig:Mnuarolla_mesh_mini} is most likely the result of the lower stability properties inherent in the MINI element. To support this view, we perform a simulation using a vertically extruded mesh (extrusion in 7 layers from bed to surface): such a mesh does not resolve the cusp-like feature as well, but results in elements of better quality (larger ratio of minimum/maximum element angles). Such extruded meshes are very common in large-scale ice-sheet models and are therefore of interest. \cref{fig:Mnuarolla_extruded_mini} shows the independence of the eigenvalues of the regularization parameter $\varepsilon$ for a simulation using an extruded mesh consisting of 1792 elements, approximately equal the amount of elements as the unstructured mesh with $lc=32$.

The numerical results presented above suggest that the \emph{inf-sup} constant $c_{\nu}$ in practice is very weakly dependent on, if not nearly independent of, the regularization parameter $\varepsilon$, but is directly connected through the dependence of $c_{\nu}$ to the regular \emph{inf-sup} constant $c_0$ (see \cref{tab:arolla_mini_infsup}). If so, the quality of $M_{\nu}$ as a preconditioner would be related to the stability qualities of a specific \emph{inf-sup} stable element, e.g., $P2P1$ or MINI.

\begin{figure*}[h!]
  \centering
  \begin{subfigure}[b]{\subfigwidthdoublecolumn}
    \centering
    \includegraphics[width=\textwidth]{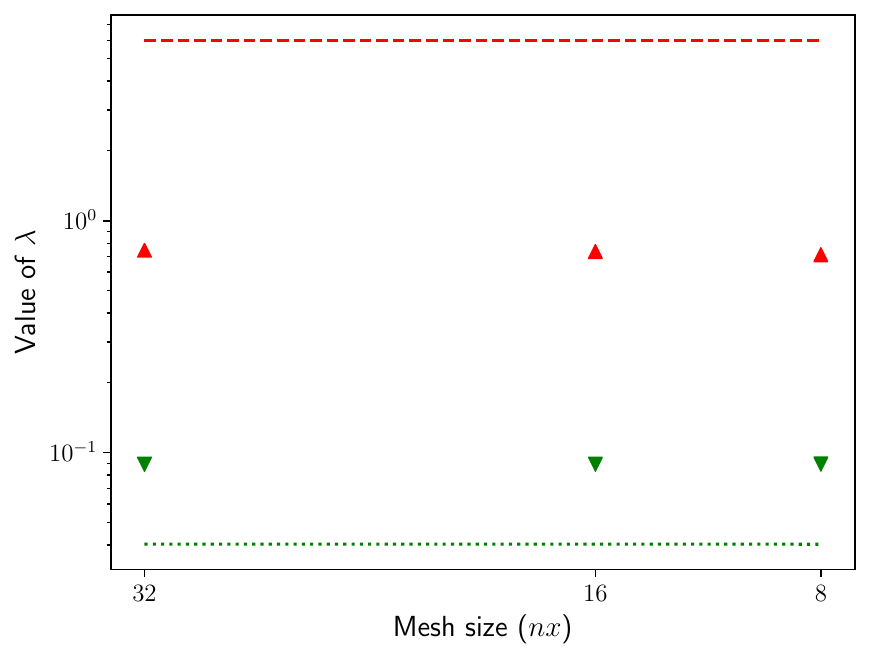}
    \caption{MS with low regularity - $P2P1$}
    \label{fig:Mnubelenki_mesh_p2p1}
  \end{subfigure}
  \hfill
  \begin{subfigure}[b]{\subfigwidthdoublecolumn}
    \centering
    \includegraphics[width=\textwidth]{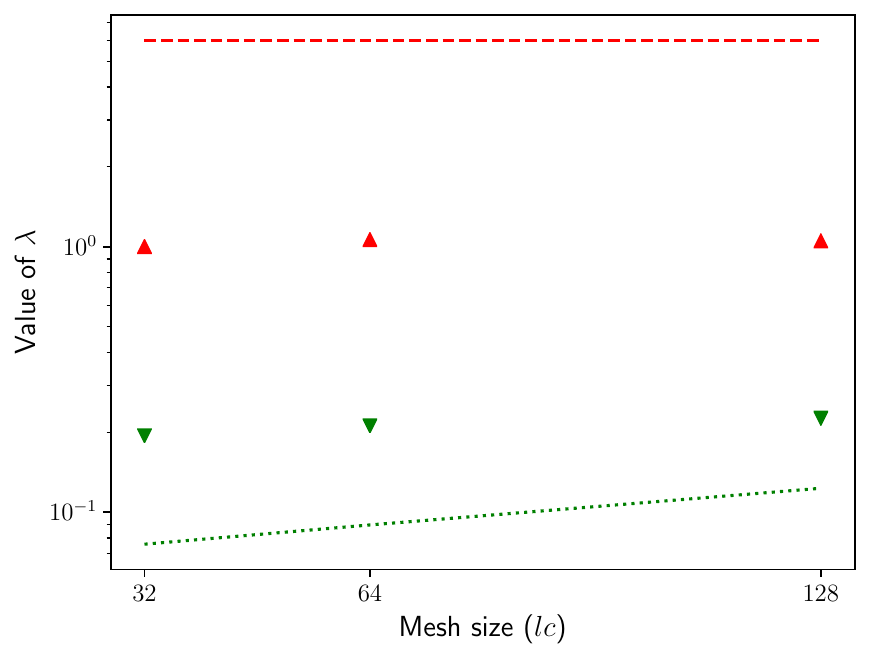}
    \caption{\experimentname{} - $P2P1$}
    \label{fig:Mnuarolla_mesh_p2p1}
  \end{subfigure}
  \caption{Eigenvalues of $M_{\nu}^{-1}S$ and their dependency on mesh
    size $nx$ and $lc$ for the experiments using the manufactured
    solution (MS) with low regularity and Haut Glacier d'Arolla
    (\experimentname{}), respectively. Mesh size is finer to the left. Colors and markers as in
    \cref{fig:Mnugraphs}.}
\label{fig:Mnu_mesh}
\end{figure*}

\begin{figure*}[h!]
  \centering
  \begin{subfigure}[b]{\subfigwidthdoublecolumn}
    \centering
    \includegraphics[width=\textwidth]{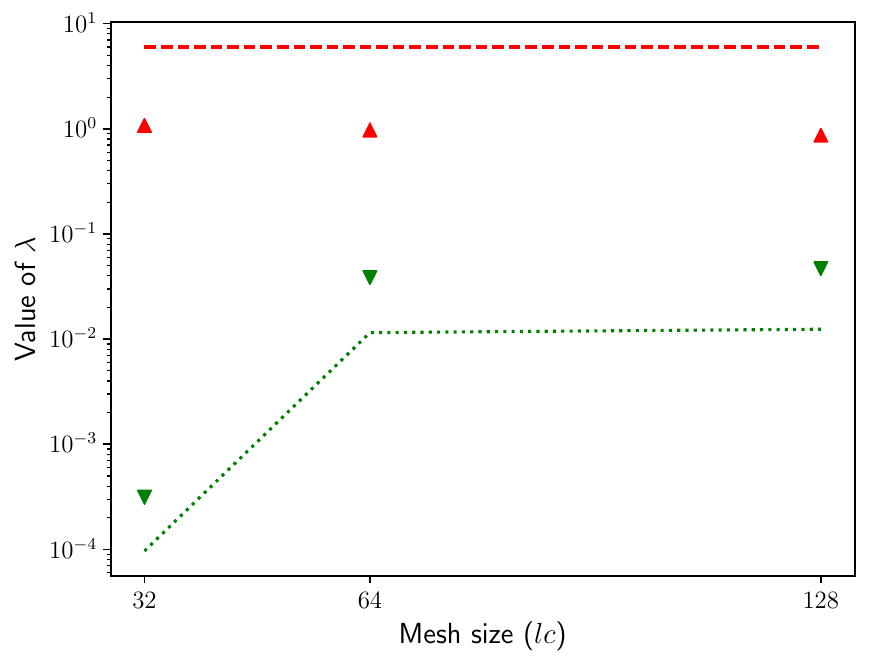}
    \caption{\experimentname{} - MINI on unstructured mesh}
    \label{fig:Mnuarolla_mesh_mini}
  \end{subfigure}
  \hfill
  \begin{subfigure}[b]{\subfigwidthdoublecolumn}
    \includegraphics[width=\textwidth]{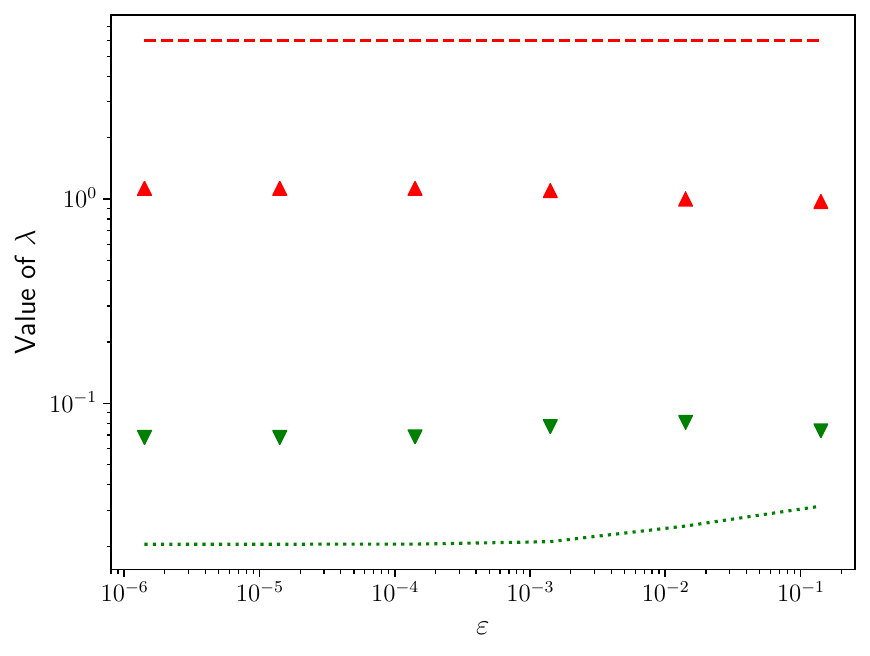}
    \caption{\experimentname{} - MINI on extruded mesh}
    \label{fig:Mnuarolla_extruded_mini}
  \end{subfigure}
    \begin{subfigure}[b]{\subfigwidthdoublecolumn}
    \centering
    \includegraphics[width=\textwidth]{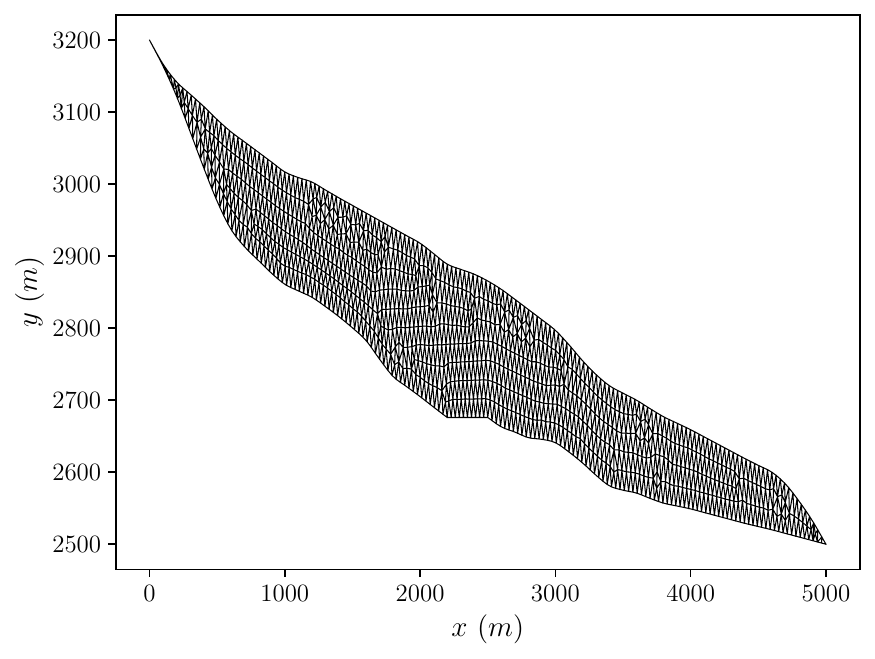}
    \caption{\experimentname{} - unstructured mesh}
    \label{fig:arolla_unstruct_mesh}
  \end{subfigure}
  \hfill
  \begin{subfigure}[b]{\subfigwidthdoublecolumn}
    \includegraphics[width=\textwidth]{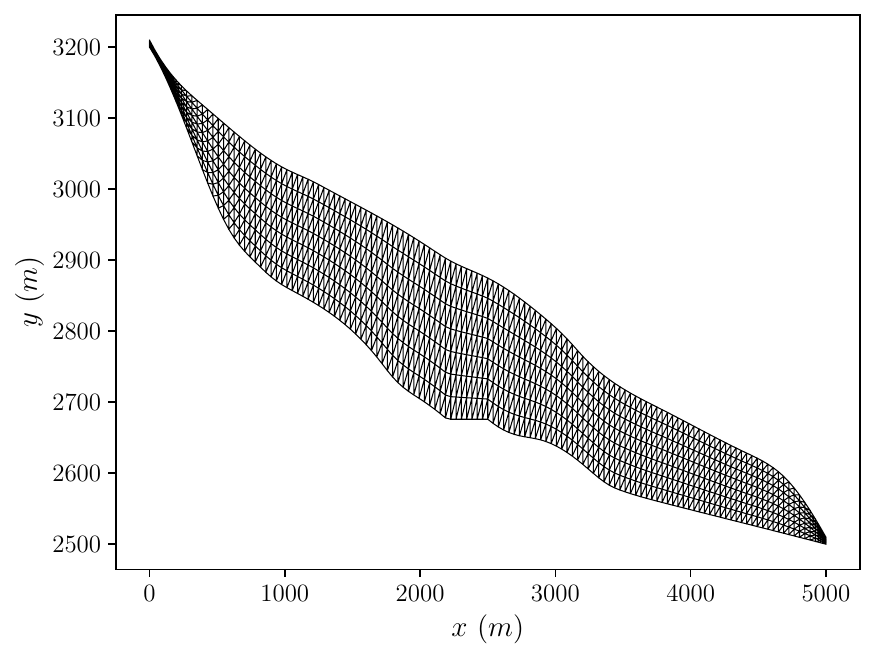}
    \caption{\experimentname{} - extruded mesh}
    \label{fig:arolla_extruded_mesh}
  \end{subfigure}

  \centering
  \caption{Eigenvalues of $M_{\nu}^{-1}S$ for the Haut Glacier
    d'Arolla (\experimentname{}) experiment. Top panel (colors and markers as in
    \cref{fig:Mnugraphs}): (a) shows the dependency of the eigenvalues
    on the mesh size $lc$ when using an unstructured mesh. The finest
    mesh size is $lc = 32$. (b) shows the dependency of the
    eigenvalues on the regularization parameter $\varepsilon$ using an
    extruded mesh with similar mesh size to $lc = 32$ (total number of
    elements in the triangulations are approximately equal). The
    bottom panel shows the unstructured (c) and extruded (d) meshes, respectively.}
  \label{fig:Mnu_extruded}
\end{figure*}

\section{Summary and Conclusion}
\label{sec:summary}
In this study we consider Schur-block preconditioners for the discretized $\ppow$-Stokes equations for fluids with shear-dependent viscosity. In particular, we focus on the regularized constitutive equation for the power-law fluid ice, which has no explicit lower bound for the viscosity and an upper bound inversely proportional to the regularization parameter $\varepsilon$. Based on previous results in \citet{GrinevichOlshanskii2009,He_etal2015} we adapt the theory to the considered power-law fluid and derive bounds of the eigenvalues of the preconditioned Schur block using either the mass matrix, $M$, or the viscosity-scaled mass matrix, $M_{\nu}$. Both the Newton and the Picard method are considered. For $M$ the lower bound for the eigenvalues depends directly on $\varepsilon$ and the upper bound on the maximum strain rate solved for in the previous non-linear iteration. For $M_{\nu}$ the lower bound depends only weakly and indirectly, as shown \citet{GrinevichOlshanskii2009}, on $\varepsilon$ while the upper bound is constant dependent of the given problem dimension for the Picard method and for Newton method additionally dependent on the value of $\ppow$. Hence, using $M_{\nu}$ results in a better clustering of eigenvalues for both the Picard and Newton method. Eigenvalues computed in numerical experiments show close agreement with the theoretical bounds presented. Furthermore, numerically computed eigenvalue bounds suggest that the lower eigenvalue bound for the $M_{\nu}$-preconditioned system is nearly independent of $\varepsilon$, further confirming the theoretical results from \citet{GrinevichOlshanskii2009}. The numerical experiments indicate that the lower bounds for $M_{\nu}$ depend more directly on the classical LBB \emph{inf-sup} connected to the domain and stability properties of a specific finite element. For MINI elements this suggests that the efficiency of the preconditioner depends on the quality of the mesh, which is particularly relevant to large-scale ice-sheet simulations, in which MINI elements are commonly used.
\\

\noindent {\bf Acknowledgments}\\
The authors would like to thank Prof. M. Olshanskii for kindly
answering our questions regarding the original theory presented in
{\protect\NoHyper\citet{GrinevichOlshanskii2009}\protect\endNoHyper}.
Funding for this research was provided by the Swedish e-Science
Research Centre (SeRC). The authors declare that they have no known
competing financial interests or personal relationships that could
have appeared to influence the work reported in this paper.

\bibliographystyle{abbrvnat}

\appendix
\renewcommand{\thesection}{Appendix \Alph{section}}
\section{}
\Cref{tab:arolla_mini_infsup} show how the value of the \emph{inf-sup} constants $c_{\nu}$  and $c_0$ vary with mesh size ($lc$) for the \experimentname{} experiment using MINI elements. The strong correlations between $c_{\nu}$ and $c_0$ indicates that the dependence of $c_{\nu}$ on the regularization parameter $\varepsilon$ is as suggested in \citet{GrinevichOlshanskii2009} very weak.

\begin{table}[!h]
  \caption{Dependence of the \emph{inf-sup} constants $c_{\nu}$ and $c_0$ on mesh size for the \experimentname{} on unstructured mesh using MINI elements.}
  \label{tab:arolla_mini_infsup}
    \begin{tabular}[c]{||c | c c c||}
      \hline
      $lc$ & 32 & 64 & 128 \\ 
      \hline
      $c_0$ & 0.012 & 0.085 & 0.146 \\
      \hline
      $c_{\nu}$ & 0.010 & 0.107 & 0.111\\
      \hline
    \end{tabular}
\end{table}

\end{document}